\documentclass[12pt]{article}

\usepackage[a4paper,top=28mm,bottom=28mm,inner=30mm,outer=30mm]{geometry}

\usepackage{amssymb,amsfonts,amsthm, mathtools, latexsym,amsmath,hhline,array,longtable,enumitem}
\usepackage{pdfsync,color, comment,colortbl, mathrsfs,stmaryrd,cite,graphicx,lscape}
\usepackage{tikz, tkz-graph, tkz-berge}
\usepackage{caption, subcaption, float, blindtext, multicol}
\usepackage{nicematrix, listings}

\tikzset{main node/.style={circle,fill=black,draw,minimum size=2mm,inner sep=0pt}, }
\usetikzlibrary{positioning,fit,calc}

\usepackage{ifpdf}
\ifpdf 
\usepackage[colorlinks=true, citecolor=black, linkcolor=black, urlcolor=black]{hyperref} 
\fi

\newcommand{\B}{\mathcal{B}}
\renewcommand{\P}{\mathcal{P}}
\newcommand{\D}{\mathcal{D}}

\usepackage{stmaryrd}

\renewcommand{\mod}{\operatorname{mod} \;}

\newtheorem{formula}{}[section]
\newtheorem{definition}[formula]{Definition}

\newtheorem{remark}[formula]{Remark}

\newtheorem{theorem}[formula]{Theorem}

\newtheorem{example}[formula]{Example}

\title{Switching graphs and designs}

\author{Aida Abiad
\thanks{Department of Mathematics and Computer Science, 
Eindhoven University of Technology, The Netherlands (a.abiad.monge@tue.nl)}
\thanks{Department of Mathematics and Data Science of Vrije Universiteit Brussel, Belgium}
\and 
Dean Crnkovi\'c 
\thanks{Faculty of Mathematics, University of Rijeka, Croatia (deanc@math.uniri.hr)}
\and
Louka Peters
\thanks{Department of Mathematics: Analysis, Logic and Discrete Mathematics, Ghent University, Belgium (loukapeters99@gmail.com)}
\and
Andrea \v{S}vob
\thanks{Faculty of Mathematics, University of Rijeka, Croatia (asvob@math.uniri.hr)}
}

\date{}

\begin{document}

\maketitle

\begin{abstract}
Switching methods can be seen as certain local transformations that do not alter their basic parameters of a combinatorial structure. Efforts have been devoted in the literature to relate and unify the switching theories for codes and designs, and also for Hadamard matrices and graphs. The combinatorial structures we consider in this paper are graphs and designs. We show an extension of known switching method for constructing 2-designs to divisible designs, and then provide some examples of its application. Moreover, we prove several equivalences between switching methods for graphs and designs, and as a byproduct, we obtain a new switching method to obtain 2-designs. 

\medskip

\noindent \textbf{Keywords:} switching, cospectral graph, 2-designs divisible designs\\
\noindent \textbf{MSC:} 05B05, 05C50
\end{abstract}

%%%%%%%%%%%%%%%%%%%%%%%%%%%%%%%%%
\section{Introduction}
%%%%%%%%%%%%%%%%%%%%%%%%%%%%%%%%%

Multiple local operations of combinatorial structures (such as \linebreak Hadamard matrices, graphs, codes or designs) that leave the basic parameters unaltered, have been widely used in the literature under the name of \emph{switching}. For such switching methods to work, the combinatorial object has to satisfy some conditions.

For the case of such structures being graphs, multiple switching operations for the construction of \emph{cospectral graphs} (graphs with the same spectrum) are known in the literature. Godsil and McKay \cite{Godsil1982} introduced one of the first and most fruitful switching methods (for short, GM-switching). Other switching methods to construct cospectral graphs have recently been introduced in the literature, such as the method found by Wang, Qiu and Hu (for short, WQH-switching) \cite{wang2019cospectral}.
The goal of such graph switching methods is to construct cospectral graphs, which is useful for understanding what graph properties cannot be detected by the spectrum (see e.g. \cite{h1996,hs1995,bch2015,ABBCGV2022}), and which provides new insights to a famous conjecture in this area due to Haemers \cite{which} (``almost all graphs are determined by their spectrum''). Other applications of the graph switching methods are the construction of new strongly regular graphs, or its use for providing a negative answer to Haemers' conjecture for certain graph classes (see e.g.\ \cite{abiad2016switched,cioba,Kubota2016,johnson,haemers2010graphs}).

Regarding switching methods for designs, Denniston \cite{denniston1982enumeration} used a method called switching ovals for a construction of symmetric (25,9,3) designs. A similar idea was used by Orrick in \cite{orrick2008switching}. The switching using Pasch configurations, therefore called Pasch switch, was used in  \cite{gibbons1976computing, grannell2001pasch} for a construction of new Steiner triple systems from known ones. Furthermore, {\"O}sterg{\aa}rd \cite{O2012} introduced a switching for codes and Steiner systems. In \cite{jungnickel1991sdp}, Jungnickel and Tonchev used maximal arcs for a transformation of quasi-symmetric designs that leads to a construction of new designs, that are still quasi-symmetric. This transformation can also be described as switching. In \cite{crnkovic2022switching}, the second and fourth author introduced a switching method (for short, CS-switching) that can be applied to any 2-design having a set of blocks satisfying certain properties.

Attention has been devoted to studying the link between switching methods for different combinatorial objects. One common feature of the switching methods is that they define a local transformation with variants for codes, designs, and related combinatorial structures. Through the definition of this transformation, termed switching, a variety of earlier methods and results for different structures can be unified; a first attempt is the work by \"Ostergård \cite{O2012}. In particular, in \cite{O2012} the author establishes links between switching methods for combinatorial designs and codes. Recently, an equivalence between two switching methods to construct inequivalent Hadamard matrices by Orrick \cite{orrick2008switching} and a switching method for constructing cospectral graphs by Godsil and McKay \cite{Godsil1982} has been shown by the first and the third author in \cite{abiad2024switching}. In \cite{CrEganSvob}, a definition of switching was introduced that incorporates various known types of switching. Further, this switching was applied to extend Orrick’s switching methods to Butson Hadamard and complex Hadamard matrices. In this paper, we show some new relationships and equivalences between switching methods for designs and graphs. In particular, we show an extension of CS-switching for designs, so that it can no longer only be used for 2-designs. We continue with a generalization of switching an oval for designs, and apply that to Menon designs from Hadamard matrices. For this application, we expand the existing connection between switching an oval and switching a closed quadruple to GM-switching. Afterwards, we examine the effect of CS-switching on the incidence graph of a 2-design, and find some conditions for which CS-switching can be made equivalent to GM-switching. Remaining on the topic of CS-switching and graph switching, we consider the necessary conditions for WQH-switching of the incidence matrix of a 2-design to deliver again an incidence matrix of a 2-design. From this we define a new switching method for 2-designs and show how it extends a switching result by Ihringer and Munemasa \cite{ihringer2019new}. Finally, we provide several examples of divisible designs that can be obtained from the new switching method.

%%%%%%%%%%%%%%%%%%%%%%%%%%%%%%%%%
\section{Preliminaries}\label{sec:prelim}
%%%%%%%%%%%%%%%%%%%%%%%%%%%%%%%%%

Let us start by giving some background information on the link between graphs and designs via incidence graphs, and on the known switching methods to construct cospectral graphs and 2-designs.

%%%%%%%%%%%%%%%%%%%%%%%%%%%%%
\subsection{Graphs and designs}
%%%%%%%%%%%%%%%%%%%%%%%%%%%%%

An \emph{incidence structure} is an ordered triple ${\mathcal{D}}=({\mathcal{P}},{\mathcal{B}},{\mathcal{I}})$ 
where ${\mathcal{P}}$ and ${\mathcal{B}}$ are non-empty disjoint sets and 
$ {\mathcal{I}}\subseteq {\mathcal{P}}\times {\mathcal{B}}$.
The elements of the set ${\mathcal{P}}$ are called points, the
elements of the set ${\mathcal{B}}$ are called blocks and
$\mathcal{I}$ is called an incidence relation.
If $|{\mathcal{P}}|=|{\mathcal{B}}|$, then the incidence structure is
called symmetric. The incidence
matrix of an incidence structure is a $v \times b$ matrix
$[m_{ij}]$ where $v$ and $b$ are the numbers of points and blocks
respectively, such that $m_{ij} = 1$ if the point $P_i$ and the block 
$x_j$ are incident, and $m_{ij} = 0$ otherwise. An isomorphism
from one incidence structure to another is a bijective mapping of
points to points and blocks to blocks which preserves incidence.

A \emph{$t$-$(v,k,\lambda)$ design} is a finite incidence structure
${\mathcal{D}}=({\mathcal{P}},{\mathcal{B}},{\mathcal{I}})$ satisfying the
following requirements:
\begin{enumerate}
  \item $|{\mathcal{P}}|=v$,
  \item every element of ${\mathcal{B}}$ is incident with exactly
  $k$ elements of ${\mathcal{P}}$,
  \item every $t$ elements of ${\mathcal{P}}$ are incident with exactly
$\lambda$ elements of ${\mathcal{B}}$.
\end{enumerate}

In a $t$-$(v,k,\lambda )$ design every point is incident with exactly $r=\frac{\lambda (v-1)}{k-1}$ blocks, and $r$ is called the replication number of a design.
A 2-design is also called a block design. If $v=b$, a design is called \textit{symmetric}. In a symmetric design, each two blocks meet in exactly $\lambda$ points.

An incidence structure with $v$ points and the constant block size $k$ is a 
(group) divisible design with parameters $(v,k, \lambda_1, \lambda_2, m,n)$ 
whenever the point set can be partitioned into $m$ classes of size $n$, 
such that two points from the same class are incident with exactly $\lambda_1$ common blocks, and two points from different classes are 
incident with exactly $\lambda_2$ common blocks. A divisible design $D$ is said to be symmetric (or to have the dual property) if the dual of $D$ is a 
divisible design with the same parameters as $D$. 

Every design has an incidence graph, making it a very useful tool for our research. 

\begin{definition}\cite{haemers2011matrices}
    Given any design \(\D = (\P,\B,I)\), the \emph{incidence graph} $IG(\D)$ is a bipartite graph with vertex set \(\P \cup \B\) and an edge between \(P \in \P\) and \(B \in \B\) if and only if \((P,B) \in I\).
\end{definition}

The incidence graph of a \(t\)-\((v,k,\lambda)\) design is \((r,k)\)-regular and has the property that every \(t\) vertices of \(\P\) have \(\lambda\) common neighbours.
 If $M$ is the incidence matrix of a design $\D$, then the adjacency matrix of $IG(\D)$ is given as follows
   \[
    \begin{pmatrix} 
        O   &  M\\
       M^T  &  O 
    \end{pmatrix},
    \]
where $O$ denotes a zero matrix.

%%%%%%%%%%%%%%%%%%%%%%%%%%%%%%
\subsection{Switching graphs}\label{Chap:SwitchGraphs}
%%%%%%%%%%%%%%%%%%%%%%%%%%%%%%

%%%%%%%%%%%%%%%%%%%%%%%%%%%%%%
\subsubsection{GM-switching}
%%%%%%%%%%%%%%%%%%%%%%%%%%%%%%
The most well-known method to construct cospectral graphs is the so-called Godsil-McKay switching (or GM-switching for short), introduced by Godsil and McKay in \cite{Godsil1982}.
It is a special version of Seidel switching.

\begin{definition}[GM-switching \cite{Godsil1982}] \label{Def:GMswitch}
	Let \(G=(V,E)\) be a graph. Further, let \((C_1, \dots, C_k, D)\) be a partition of \(V(G)\). Suppose that for all \(i, \, j \in \{1, \dots,k\}\):
	\begin{itemize}
		\item all vertices in \(C_i\) have an equal number of neighbours in \(C_j\),
		\item every vertex in \(D\) is adjacent to either \(0, \frac{n_i}{2} \text{ or } n_i\) vertices of \(C_i\), with \(n_i\) the number of vertices in \(C_i\).
	\end{itemize} 
	Take all vertices \(v\) in \(D\) that have \(\frac{n_i}{2}\) neighbours in \(C_i\), remove those  \(\frac{n_i}{2}\) edges, and connect \(v\) with the other \(\frac{n_i}{2}\) vertices of \(C_i\). This method of constructing graphs is \emph{GM-switching} and the considered partition is the \emph{switching partition}.
\end{definition}

\begin{theorem} \cite{Godsil1982}
	Let \(G'\) be the graph obtained from \(G\) by GM-switching, then \(G\) and \(G'\) are cospectral.
\end{theorem}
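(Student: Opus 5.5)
We will prove cospectrality by exhibiting an orthogonal matrix $Q$ with $Q^T A Q = A'$, where $A$ and $A'$ are the adjacency matrices of $G$ and $G'$. Order the vertices so that $A$ has block form according to the partition $(C_1,\dots,C_k,D)$. For $m\ge 1$ let $J_m$ be the $m\times m$ all-ones matrix. Put
\[
Q_m=\tfrac{2}{m}J_m-I_m ,\qquad Q=\operatorname{diag}(Q_{n_1},\dots,Q_{n_k},I_{|D|}).
\]
Each $Q_m$ is symmetric, and since $J_m^2=mJ_m$ we get $Q_m^2=\tfrac{4}{m^2}mJ_m-\tfrac{4}{m}J_m+I_m=I_m$. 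Hence $Q$ is symmetric and orthogonal, so $Q^TAQ=QAQ$ is similar to $A$ and has the same spectrum. It remains to verify that $QAQ=A'$, and we check this block by block.

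\textbf{Blocks $C_i\times C_j$.} Let $A_{ij}$ be the block of $A$ with rows indexed by $C_i$ and columns by $C_j$. By the first hypothesis every row of $A_{ij}$ has the same sum $d_{ij}$. Counting edges between $C_i$ and $C_j$ in two ways shows that every column of $A_{ij}$ also has a constant sum, namely $d_{ij}n_i/n_j$. Therefore $J_{n_i}A_{ij}=\tfrac{d_{ij}n_i}{n_j}J$ and $A_{ij}J_{n_j}=d_{ij}J$, where $J$ denotes the $n_i\times n_j$ all-ones matrix. Expanding the product gives
\[
Q_{n_i}A_{ij}Q_{n_j}=\tfrac{4}{n_in_j}J_{n_i}A_{ij}J_{n_j}-\tfrac{2}{n_i}J_{n_i}A_{ij}-\tfrac{2}{n_j}A_{ij}J_{n_j}+A_{ij}.
\]
Using $J_{n_i}A_{ij}J_{n_j}=d_{ij}n_iJ$, the three correction terms are $\tfrac{4d_{ij}}{n_j}J$, $-\tfrac{2d_{ij}}{n_j}J$ and $-\tfrac{2d_{ij}}{n_j}J$, which sum to zero. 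So these blocks are unchanged, which agrees with $A'$ because GM-switching does not alter edges inside $C_1\cup\dots\cup C_k$.

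\textbf{Block $D\times D$.} This block is multiplied by identities on both sides, so it is unchanged, as required.

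\textbf{Blocks $C_i\times D$ and $D\times C_i$.} The $C_i\times D$ block becomes $Q_{n_i}$ times the old block, so each column $x\in\{0,1\}^{n_i}$, describing the neighbours in $C_i$ of one vertex of $D$, is sent to $Q_{n_i}x=\tfrac{2}{m}(\mathbf 1^Tx)\mathbf 1-x$ with $m=n_i$. We compute this in the three allowed cases.
\begin{itemize}
\item If $\mathbf 1^Tx=0$, then $x=0$ and it maps to $0$.
\item If $\mathbf 1^Tx=n_i$, then $x=\mathbf 1$ and it maps to $2\mathbf 1-\mathbf 1=\mathbf 1$.
\item If $\mathbf 1^Tx=n_i/2$, then it maps to $\mathbf 1-x$, which is exactly the complementation prescribed by GM-switching.
\end{itemize}
The $D\times C_i$ block is the transpose of the $C_i\times D$ block, so it transforms the same way. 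Hence $QAQ=A'$.

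The only step needing care is the $C_i\times C_j$ computation, specifically the column-sum observation from double counting. Everything else is direct expansion.
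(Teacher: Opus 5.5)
Your proof is correct and is essentially the original Godsil--McKay argument; the paper gives no proof of its own and only cites it. The argument conjugates by the symmetric orthogonal matrix $Q=\operatorname{diag}(\tfrac{2}{n_1}J-I,\dots,\tfrac{2}{n_k}J-I,I)$, which turns $A$ into $A'$. One small correction: double counting does not by itself make the column sums of $A_{ij}$ constant. Constancy comes from the hypothesis applied to the pair $(j,i)$, since the column sums of $A_{ij}$ are the row sums of $A_{ji}=A_{ij}^T$; double counting then only supplies their value $d_{ij}n_i/n_j$.
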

 
%%%%%%%%%%%%%%%%%%%%%%%%%%%%%%
\subsubsection{WQH-switching}
%%%%%%%%%%%%%%%%%%%%%%%%%%%%%%

Another switching method was introduced by Wang, Qiu and Hu (for short, WQH-switching) in \cite{wang2019cospectral}.

\begin{definition}[WQH-switching \cite{wang2019cospectral}] \label{Def:WQH-switch}
	Let \(G=(V,E)\) be a graph. Denote \(C(v)\) as the set of neighbours of vertex \(v \in V\) in the set \(C \subseteq V\). Let \(C_1,C_2\) be disjoint subsets of \(V\) such that:
	\begin{itemize}
		\item \(|C_1| = |C_2|\),
		\item there is a constant \(c\) such that for all \(i,j \in \{1,2\}\), \(i\neq j\) and every \(v \in C_i: |C_i(v)| - |C_j(v)| = c\), 
		\item for every vertex \(u \in V \setminus (C_1 \cup C_2)\) either:
		\begin{enumerate}
			\item \(|C_1(u)| = |C_1|\) and \(|C_2(u)| = 0\),\label{Def:WQH-switch(a)}
			\item \(|C_1(u)| = 0\) and \(|C_2(u)| = |C_2|\),\label{Def:WQH-switch(b)}
			\item or \(|C_1(u)| = |C_2(u)|\).
		\end{enumerate}
	\end{itemize}
	For every \(u \in C_1 \cup C_2\) and every \(v \in V\setminus (C_1 \cup C_2)\) for which \ref{Def:WQH-switch(a)} or \ref{Def:WQH-switch(b)} is true, switch the adjacency between \(u\) and \(v\). This is called \emph{WQH-switching}, and \(\{C_1,C_2\}\) is called a \emph{switching set} of \(G\).
\end{definition}

\begin{theorem}\cite{wang2019cospectral}
	Let \(G'\) be the graph obtained from graph \(G\) by WQH-switching, then \(G\) and \(G'\) are cospectral.
\end{theorem}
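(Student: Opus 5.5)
The plan is to exhibit an explicit orthogonal matrix $Q$ with $Q^TAQ = A'$, where $A$ and $A'$ are the adjacency matrices of $G$ and of the switched graph $G'$. Order the vertices as $C_1, C_2, R$ with $R = V\setminus(C_1\cup C_2)$, and let $m = |C_1| = |C_2|$. Take
\[
Q = \begin{pmatrix} I_m - \tfrac{1}{m}J & \tfrac{1}{m}J & O \\ \tfrac{1}{m}J & I_m - \tfrac{1}{m}J & O \\ O & O & I \end{pmatrix},
\]
where $J$ is the all-ones $m\times m$ matrix. Write $N_m = \frac1m J$ for the projection onto the all-ones vector. The upper $2m\times 2m$ block of $Q$ is then $I - P$, where $P$ is the orthogonal projection onto $\mathrm{span}\{(\mathbf 1,-\mathbf 1)\}$ scaled by $2$; concretely, $P = \begin{pmatrix} N_m & -N_m\\ -N_m & N_m\end{pmatrix}$, which is idempotent of rank one. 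So $I-P$ acts as a reflection on $\mathbf 1_{C_1}-\mathbf 1_{C_2}$: that vector is sent to its negative and everything orthogonal to it is fixed. Hence $I-P$, and therefore $Q$, is symmetric, orthogonal and an involution. Cospectrality then follows immediately from $A' = QAQ$.

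The core of the proof is the block verification, in three parts.

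\emph{The $C\times C$ blocks.} Write $A_{11}, A_{12}, A_{21}, A_{22}$ for the blocks of $A$ on $C_1\cup C_2$. The condition $|C_i(v)|-|C_j(v)|=c$ says that $(A_{11}-A_{12})\mathbf 1 = c\mathbf 1$ and $(A_{22}-A_{21})\mathbf 1 = c\mathbf 1$. Consequently $\mathbf 1_{C_1}-\mathbf 1_{C_2}$ is an eigenvector of the block $A_C$ with eigenvalue $c$. The reflection $I-P$ commutes with $A_C$ because $A_C$ is symmetric and preserves the line spanned by this vector, so $(I-P)A_C(I-P) = A_C$. This matches the fact that WQH-switching does not alter edges inside $C_1\cup C_2$.

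\emph{The $R\times R$ block.} Since $Q$ is the identity on $R$, this block is unchanged.

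\emph{The $C\times R$ columns.} For a vertex $u\in R$ with column $(x,y)$, where $x,y$ are $0/1$ vectors on $C_1, C_2$, we need $(I-P)(x,y)^T$. Since $P(x,y)^T = \frac{|x|-|y|}{m}(\mathbf 1,-\mathbf 1)$, there are three cases. In case (c), $|x|=|y|$, so the column is fixed and $u$ is not switched. In case (a), $(x,y) = (\mathbf 1, \mathbf 0)$ is mapped to $(\mathbf 0,\mathbf 1)$. In case (b) the image is symmetrically $(\mathbf 1,\mathbf 0)$. In both (a) and (b) this is exactly complementing the adjacency of $u$ to $C_1\cup C_2$, as prescribed by the switching.

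The step needing the most care is checking that the $C\times C$ block is invariant, since it relies on the constant $c$ being the same for both $i=1,2$. Symmetry of $A_C$ is also essential there: it is what makes the orthogonal complement of the eigenvector invariant, so that the reflection commutes with $A_C$. Once the three block computations are in place, $QAQ = A'$, and $G$ and $G'$ are cospectral.
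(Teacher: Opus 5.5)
Your proof is correct. The paper gives no argument of its own for this cited result, and your conjugation by the block-diagonal Householder reflection $Q=\mathrm{diag}\bigl(I_{2m}-\tfrac1m ww^{T},\,I\bigr)$, with $w=\mathbf 1_{C_1}-\mathbf 1_{C_2}$, is the standard way this result is proved. All three of your block checks go through: $w$ is an eigenvector of $A_C$ with eigenvalue $c$, $Q$ is the identity on $R$, and each column transforms as $(x,y)\mapsto (x,y)-\tfrac{|x|-|y|}{m}w$.

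One slip needs fixing. $P=\tfrac1m ww^{T}$ satisfies $P^2=2P$, so it is $\tfrac12P$, not $P$, that is idempotent. This is exactly why $I-P$ is an involutive reflection rather than a projection, which is what your own phrase ``projection scaled by $2$'' says.
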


We should note that the above WQH-switching is known to hold in a more general form, see \cite{qiu2020theorem}.

\subsection{Switching 2-designs} \label{Chap:SwitchDesigns}
%%%%%%%%%%%%%%%%%%%%%%%%%%%%%%

Next we introduce two switching methods for designs. The first is switching an oval. 
The second switching is a tool to construct new 2-designs, and will be called CS-switching.

%%%%%%%%%%%%%%%%%%%%%%%%%%%%%%
\subsubsection{Switching an oval}\label{Sec:switchOval}
%%%%%%%%%%%%%%%%%%%%%%%%%%%%%%

In \cite{denniston1982enumeration}, Denniston introduced a switching to create new $2$-$(25,9,3)$ designs from symmetric $2$-$(25,9,3)$ designs that have an oval. 
In his paper Denniston uses the definition of an oval as generalized by Assmus and Van Lint in \cite{assmus1979ovals}. 
However, in this definition, the number of points in an oval can vary. 
Since switching an oval only works if the oval contains four points exactly, we will define ovals as having four points.

\begin{definition}\label{Def:oval}
	An \emph{oval} is a set of four points of which no three lie in a common block.
\end{definition}

Consider a symmetric $2$-$(25,9,3)$ design that has a set of four points $\{P,Q,R,S\}$ that form an oval.
There are nine distinct blocks going through $P$ and $Q,R$ or $S$, which are the only blocks that contain \(P\).

\begin{definition}[Switching an oval \cite{denniston1982enumeration}]\label{Def:switchOval}
	Let \(\D\) be a symmetric $2$-$(25,9,3)$ design with oval $\{P,Q,R,S\}$. Take the three blocks that contain both \(P\) and \(Q\) and the three blocks that contain \(R\) and \(S\). Change the incidences between these six blocks and the oval. This switching method is called \emph{switching an oval}.
\end{definition}

\begin{theorem}\label{Thm:switchOval}\cite{denniston1982enumeration}
	Let \(\D'\) be the result of switching an oval of a symmetric $2$-$(25,9,3)$ design \(\D\), then \(\D'\) is again a symmetric $2$-$(25,9,3)$) design.
\end{theorem}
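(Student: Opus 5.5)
We work directly with the incidence structure. Let \(\{P,Q,R,S\}\) be the oval. Since \(\lambda=3\) and no three oval points share a block, every block meets the oval in at most two points. The blocks meeting the oval in exactly two points are the \(3\binom{4}{2}=18\) blocks through one of the six pairs, and these are pairwise distinct. Write \(\B_{XY}\) for the three blocks through the pair \(\{X,Y\}\).

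Switching replaces each \(B\in\B_{PQ}\), which contains \(P,Q\) but not \(R,S\), by \(B'=(B\setminus\{P,Q\})\cup\{R,S\}\). Symmetrically, each \(B\in\B_{RS}\) becomes \((B\setminus\{R,S\})\cup\{P,Q\}\). The number of blocks is unchanged and block sizes stay \(9\). So it suffices to check that every pair of points still lies in exactly \(3\) blocks; the replication number and the symmetric property then follow automatically.

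We sort pairs \(\{X,Y\}\) by how many points they share with the oval.

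\emph{No oval point.} Neither point is an oval point, so no block changes its incidence with \(X\) or \(Y\), and the count stays \(3\).

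\emph{Both points in the oval.} For \(\{P,Q\}\), the three blocks of \(\B_{PQ}\) lose the pair and the three blocks of \(\B_{RS}\) gain it. No other block changes at \(P\) or \(Q\), so the count stays \(3\); the pair \(\{R,S\}\) is symmetric. For a mixed pair such as \(\{P,R\}\), every switched block contains exactly one of \(P,R\) before and after the switch, so no block through both is created or destroyed, and the blocks of \(\B_{PR}\) are untouched.

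\emph{Exactly one oval point} (the key step). Take \(X=P\) and \(Y\) outside the oval, and let \(a_{Z}\) be the number of blocks through \(P,Z,Y\) for \(Z\in\{Q,R,S\}\). Every block through \(P\) meets the oval in exactly two points, since \(r=9=3\cdot 3\) accounts for all blocks through \(P\). Hence \(a_Q+a_R+a_S=\lambda=3\).

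After switching, \(P\) and \(Y\) lose the \(a_Q\) blocks of \(\B_{PQ}\) through \(Y\) and gain the blocks of \(\B_{RS}\) through \(Y\); call their number \(b_{RS}\). The count is preserved if and only if \(a_Q=b_{RS}\). Doing the same for \(Q,R,S\), with \(b\) notation for blocks through a pair and \(Y\), we get
\[
b_{PQ}+b_{PR}+b_{PS}=b_{PQ}+b_{QR}+b_{QS}=b_{PR}+b_{QR}+b_{RS}=b_{PS}+b_{QS}+b_{RS}=3 .
\]
Adding the first two and subtracting the last two gives \(2b_{PQ}-2b_{RS}=0\), so \(b_{PQ}=b_{RS}\), which is exactly what is needed. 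This argument uses only that every pair of points lies in \(\lambda\) blocks, that each point lies on exactly \(r=3\lambda\) blocks, and that no three oval points share a block. The only real obstacle is this counting identity; the rest is bookkeeping.
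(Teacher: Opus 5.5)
Your proof is correct, but it takes a different route from the paper. The paper only cites Denniston for this statement. The argument it actually writes out, for its generalization to symmetric $2$-$(v,3\lambda,\lambda)$ designs (which it says mirrors Denniston's), works on the dual side. Since the switched blocks change only on the oval, it checks that any two blocks still meet in $\lambda$ points. That reduces to a short case analysis on which oval pairs the two blocks carry, with nothing more than ``lose two, gain two'' or ``lose $P$, gain $R$''.

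That route is shorter, but it quietly relies on symmetry. It needs the standard fact that a square incidence structure with constant block size $k$ and constant block intersection $\lambda$ is a symmetric $2$-design.

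You instead verify the defining point-pair condition directly. The only nontrivial case is a pair made of one oval point and one outside point $Y$. There you need the identity $b_{PQ}=b_{RS}$, which you derive cleanly from the four equations. Those equations come from the fact that every block through an oval point meets the oval in exactly two points.

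In exchange, your argument is self-contained. As you note, it uses only $r=3\lambda$ and never $v=b$. So it shows that oval switching works for any $2$-design with $r=3\lambda$ (equivalently $v=3k-2$; check: $25=3\cdot 9-2$) that contains an oval, including non-symmetric ones. For those designs block intersections are not constant, so the paper's block-intersection argument is unavailable.
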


%%%%%%%%%%%%%%%%%%%%%%%%%%%%%%
\subsubsection{CS-switching}\label{Sec:switch2designs}
%%%%%%%%%%%%%%%%%%%%%%%%%%%%%%

Crnkovi{\'c} and {\v{S}}vob \cite{crnkovic2022switching} introduced a switching method for 2-designs (for short, CS-switching), and applied it to certain Menon designs with 36, 64 and 100 points to construct new designs.

\begin{definition}[CS-switching \cite{crnkovic2022switching}]\label{Def:switch2design}
	Let \(\mathcal{D = (P,B},I)\) be a 2-design and let \(\mathcal{B}_1 \subset \mathcal{B}\) be a set of blocks such that there are \(\mathcal{P}_1,\mathcal{P}_2 \subset \mathcal{P}\) with the following properties:
	\begin{enumerate}
		\item $(P,B) \notin {I}$ for every $(P,B) \in \mathcal{P}_1 \times \mathcal{B}_1$,
		\item $(P,B) \in {I}$ for every $(P,B) \in \mathcal{P}_2 \times \mathcal{B}_1$,
		\item $|\{B\in \mathcal{B}_1 : (P,B) \in {I}\}| = |\{B\in \mathcal{B}_1 : (P,B) \notin {I}\}|$ for every $P \in \mathcal{P}\setminus(\mathcal{P}_1 \cup \mathcal{P}_2)$
	\end{enumerate}
	Then $\mathcal{B}_1$ is called a \emph{switching set} of $\mathcal{D}$.
	
	$\mathcal{D}_1 = (\mathcal{P, B}, I_1)$ is obtained from the 2-design $\D$ by CS-switching with respect to $\mathcal{B}_1$ if:
	\begin{enumerate}
		\item $\forall B \in \mathcal{B \setminus B}_1, P \in \mathcal{P}: (P,B) \in I_1 \iff (P,B) \in I$
		\item $\forall B \in \mathcal{B}_1, P \in \mathcal{P}_1 \cup \mathcal{P}_2:(P,B) \in I_1 \iff (P,B) \in I $
		\item $\forall B \in \mathcal{B}_1, P \in \mathcal{P} \setminus (\mathcal{P}_1 \cup \mathcal{P}_2):(P,B) \in I_1 \iff (P,B) \notin I $
	\end{enumerate}
\end{definition}

\begin{theorem}\label{Thm:CS-switch}\cite{crnkovic2022switching}
	The design \(\D_1\) derived from the $2$-$(v,k,\lambda)$ design \(\D\) by CS-switching with respect to \(\B_1\) is again a $2$-$(v,k,\lambda)$ design.
\end{theorem}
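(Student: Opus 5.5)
We prove Theorem~\ref{Thm:CS-switch} by checking the defining conditions of a $2$-$(v,k,\lambda)$ design for $\D_1$ directly. The point set is unchanged, so $|\P|=v$. The work is in verifying that block sizes stay $k$ and that every pair of points still lies in exactly $\lambda$ blocks. Throughout we write $\P_3=\P\setminus(\P_1\cup\P_2)$, $m=|\B_1|$, and for a point $P$ we let $\B_1(P)$ denote the set of blocks of $\B_1$ containing $P$ in $\D$. By the third condition of Definition~\ref{Def:switch2design}, $|\B_1(P)|=m/2$ for every $P\in\P_3$.

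\textbf{Block sizes.} Blocks outside $\B_1$ are unchanged. Fix $B\in\B_1$. Its incidences with $\P_1\cup\P_2$ are unchanged, while on $\P_3$ they are complemented. Hence $|B|$ in $\D_1$ equals $|\P_2|+|\P_3|-|B\cap\P_3|$. This equals $k=|\P_2|+|B\cap \P_3|$ exactly when $|B\cap\P_3|=|\P_3|/2$ for every $B\in\B_1$.

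This is the main obstacle, because the switching-set conditions are stated only from the point side. I would derive it from the design property. Count, for fixed $B_0\in\B_1$, the pairs $(Q,B)$ with $Q\in\P_2$, $B\ne B_0$, and $Q\in B\cap B_0$. That seems indirect, so I would first try a cleaner route: sum $\sum_{P\in\P_3}|\B_1(P)|=|\P_3|m/2$, and also note that this sum equals $\sum_{B\in\B_1}|B\cap\P_3|=m(k-|\P_2|)$. This gives only the average, $k-|\P_2|=|\P_3|/2$. Since $|B\cap\P_3|=k-|\P_2|$ is the same for every $B\in\B_1$ (each such block contains all of $\P_2$ and none of $\P_1$), the average equals each term. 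So $|B\cap\P_3|=|\P_3|/2$ for all $B\in\B_1$, and block sizes are preserved.

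\textbf{Pair counts.} Let $\lambda_1(P,Q)$ denote the number of blocks containing both $P$ and $Q$ in $\D_1$. Only blocks in $\B_1$ change, so it suffices to show that the number of blocks of $\B_1$ containing both $P$ and $Q$ is the same in $\D$ and $\D_1$. We split into cases by where $P$ and $Q$ lie.
\begin{itemize}
\item If $P,Q\in\P_1\cup\P_2$, nothing changes.
\item If $P\in\P_1$, the count is $0$ before and after.
\item If $P\in\P_2$ and $Q\in\P_3$, the count before is $|\B_1(Q)|=m/2$ and after is $m-m/2=m/2$.
\item If $P,Q\in\P_3$, let $x=|\B_1(P)\cap\B_1(Q)|$. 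After switching, the relevant blocks are those containing neither point, of which there are $m-m/2-m/2+x=x$.
\end{itemize}
In every case the count is preserved, so every pair of points lies in exactly $\lambda$ blocks of $\D_1$, and $\D_1$ is a $2$-$(v,k,\lambda)$ design.
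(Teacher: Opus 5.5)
Your proof is correct and takes essentially the same approach as the paper, which gives this argument for the divisible-design version, Theorem~\ref{Thm:CS-switchDivDes}. It uses the same double count of incidences between $\P\setminus(\P_1\cup\P_2)$ and $\B_1$ to get $k-|\P_2|=|\P\setminus(\P_1\cup\P_2)|/2$, and the same case split on pairs of points, including the identity that for two points of $\P\setminus(\P_1\cup\P_2)$ the number of blocks of $\B_1$ containing both equals the number containing neither.
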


\section{CS-switching for divisible designs}
%%%%%%%%%%%%%%%%%%%%%%%%%%%%%%%%%

Notice that the proof of Theorem \ref{Thm:CS-switch} shows that two points of \(\D_1\) are together contained in \(\lambda\) blocks because it proves that the number of common blocks of two points does not change.
Therefore, the proof can be slightly altered to show that CS-switching also works for divisible designs, even though it was defined for 2-designs.
For completeness, we include the definition of CS-switching for divisible designs and prove that it again delivers a divisible design.

\begin{definition}[CS-switching (for divisible designs)]\label{Def:CS-switchDivDes}
	Let \(\mathcal{D = (P,B},I)\) be a divisible design and let \(\mathcal{B}_1 \subset \mathcal{B}\) be a set of blocks such that there are \(\mathcal{P}_1,\mathcal{P}_2 \subset \mathcal{P}\) with the following properties:
	\begin{enumerate}
		\item $(P,B) \notin {I}$ for every $(P,B) \in \mathcal{P}_1 \times \mathcal{B}_1$,
		\item $(P,B) \in {I}$ for every $(P,B) \in \mathcal{P}_2 \times \mathcal{B}_1$,
		\item $|\{B\in \mathcal{B}_1 : (P,B) \in {I}\}| = |\{B\in \mathcal{B}_1 : (P,B) \notin {I}\}|$ for every $P \in \mathcal{P}\setminus(\mathcal{P}_1 \cup \mathcal{P}_2)$
	\end{enumerate}
	Then $\mathcal{B}_1$ is called a \emph{switching set} of $\mathcal{D}$
	
	$\mathcal{D}_1 = (\mathcal{P, B}, I_1)$ is obtained from the divisible design $D$ by CS-switching with respect to $\mathcal{B}_1$ if:
	\begin{enumerate}
		\item $(P,B) \in I_1 \iff (P,B) \in I \; \forall B \in \mathcal{B \setminus B}_1, P \in \mathcal{P}$,
		\item $(P,B) \in I_1 \iff (P,B) \in I \; \forall B \in \mathcal{B}_1, P \in \mathcal{P}_1 \cup \mathcal{P}_2$,
		\item $(P,B) \in I_1 \iff (P,B) \notin I \; \forall B \in \mathcal{B}_1, P \in \mathcal{P} \setminus (\mathcal{P}_1 \cup \mathcal{P}_2)$.
	\end{enumerate}
\end{definition}

\begin{theorem} \label{Thm:CS-switchDivDes}
	The design \(\D_1\) derived from the divisible design \(\D\) with parameters \((v,k,\lambda_1,\lambda_2,n,m)\) by CS-switching with respect to \(\B_1\) is again a divisible design with parameters \((v,k,\lambda_1,\lambda_2,n,m)\).
\end{theorem}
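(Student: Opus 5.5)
We must show that block sizes are preserved and that the number of blocks through any two points is unchanged. Once that holds, the point partition into $m$ classes of size $n$ and the values $\lambda_1,\lambda_2$ carry over verbatim, since \(\D_1\) has the same point set. The plan is to compare \(\D\) and \(\D_1\) block by block and point-pair by point-pair, using only the three defining properties of the switching set \(\B_1\).

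\textbf{Block sizes.} Blocks outside \(\B_1\) are untouched. Take \(B\in\B_1\) and set \(\P_0=\P\setminus(\P_1\cup\P_2)\). Then \(B\) keeps its incidences with \(\P_1\cup\P_2\) and complements its incidences with \(\P_0\). Hence \(|B|\) is preserved exactly when \(B\) meets \(\P_0\) in \(|\P_0|/2\) points. Property 3 only says that each point of \(\P_0\) lies in half of the blocks of \(\B_1\), which is a statement about rows, not columns. So I need more. Every block has size \(k\); by properties 1 and 2 each block of \(\B_1\) contains all of \(\P_2\) and none of \(\P_1\). Therefore every \(B\in\B_1\) meets \(\P_0\) in exactly \(k-|\P_2|\) points, a constant \(c\). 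Double counting incidences between \(\P_0\) and \(\B_1\) gives
\[
|\B_1|\,c=|\P_0|\,\frac{|\B_1|}{2},
\]
so \(c=|\P_0|/2\). After complementation each block of \(\B_1\) still meets \(\P_0\) in \(|\P_0|-c=c\) points, and its size remains \(k\).

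\textbf{Pair counts.} Let \(P\neq Q\) be points. Only blocks in \(\B_1\) change, so it suffices to show that the number \(N(P,Q)\) of blocks of \(\B_1\) containing both points is unchanged. There are three cases.
\begin{enumerate}
\item If \(P,Q\in\P_1\cup\P_2\), their incidences with \(\B_1\) are unchanged.
\item If \(P\in\P_1\cup\P_2\) and \(Q\in\P_0\): for \(P\in\P_1\) the count is \(0\) both before and after. For \(P\in\P_2\) the count is the number of blocks of \(\B_1\) through \(Q\), which is \(|\B_1|/2\) before; after switching \(Q\) lies in the complementary half, again \(|\B_1|/2\) blocks.
\item If \(P,Q\in\P_0\), write \(x=N(P,Q)\). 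Each of \(P,Q\) lies in \(|\B_1|/2\) blocks of \(\B_1\), so the number of blocks of \(\B_1\) containing neither point is \(|\B_1|-2\cdot\frac{|\B_1|}{2}+x=x\). After switching, the blocks containing both are exactly those that previously contained neither, so the count is again \(x\).
\end{enumerate}

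Combining the two parts, \(\D_1\) has constant block size \(k\) and every pair of points lies in the same number of blocks as in \(\D\). Hence the same partition makes \(\D_1\) a divisible design with parameters \((v,k,\lambda_1,\lambda_2,n,m)\). The main subtlety, and the only step that goes beyond rereading the 2-design proof, is the block-size argument. The definition does not explicitly require blocks of \(\B_1\) to meet \(\P_0\) in half its points; this has to be derived from constant block size together with the double count.
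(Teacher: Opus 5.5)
Your proof is correct and follows essentially the same route as the paper: the paper also derives $k-|\P_2| = |\P\setminus(\P_1\cup\P_2)|/2$ by double counting the incidences between $\B_1$ and $\P\setminus(\P_1\cup\P_2)$, and it handles the pair counts through the same three cases. In particular, its argument for two points outside $\P_1\cup\P_2$ is your ``blocks containing neither equals blocks containing both'' identity.
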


\begin{proof}
	We first prove that every block in \(\D_1\) is incident with \(k\) points.
	Since incidence only changes for blocks in \(\B_1\), every block in \(\B\setminus\B_1\) still contains \(k\) points. 
	Let \(B\) be a block of \(\B_1\). This block is incident with \(k-|\P_2|\) points of \(\P\setminus(\P_1\cup\P_2)\) in \(\D\). Because of the third property of \(\B_1\), \[|\{(P,B) \in I : P\in \P\setminus(\P_1\cup\P_2), B \in \B_1\}| = \frac{|\P\setminus(\P_1\cup\P_2) \times \B_1|}{2}.\]
	Which means that \[k-|\P_2| = \frac{|\P\setminus(\P_1\cup\P_2)|}{2}.\]
	Thus, the block \(B\) is incident with \(k-|\P_2|\) points of \(\P\setminus(\P_1\cup\P_2)\) in both \(\D\) and in \(\D_1\). Therefore, all blocks of \(\B_1\) contain \(k\) points in \(\D_1\).
	
	Secondly, we show that the number of common blocks of two distinct points remains the same. Incidence is only changed for points in \(\P\setminus(\P_1\cup\P_2)\), so for every two points in \(\P_1\cup\P_2\) this is true.
	After switching, a point of \(\P\setminus(\P_1\cup\P_2)\) is still incident with half of the blocks of \(\B_1\). Since a point of \(\P_1\cup\P_2\) is adjacent with either all of no blocks of \(\B_1\), a point of \(\P_1\cup\P_2\) and a point of \(\P\setminus(\P_1\cup\P_2)\) remain incident with the same number of common blocks after switching.
	Now let \(P_1\) and \(P_2\) be two points from \(\P\setminus(\P_1\cup\P_2)\). Denote by \(x\) the number of blocks that are incident with both \(P_1\) and \(P_2\) and by \(y\) the number of blocks that contain neither \(P_1\) nor \(P_2\). Because \(P_1\) and \(P_2\) are both contained in half of the blocks of \(\B_1\), \[|\B_1| = y + \frac{|\B_1|}{2} + \frac{|\B_1|}{2} -x.\]
	Thus \(y = x\), meaning that after switching, there will still be \(x\) blocks of \(\B_1\) that contain both \(P_1\) and \(P_2\). Since incidence with the other blocks in \(\B\setminus\B_1\) does not change, points \(P_1\) and \(P_2\) are incident with the same number of common blocks in \(\D\) and \(\D_1\). 
	Since the number of common blocks of two points never changes, the same \(n\) groups of size \(m\) can be selected in \(\D_1\) to get a divisible design with parameters \((v,k,\lambda_1,\lambda_2,n,m)\). 
\end{proof}

\begin{example}
	Consider the incidence matrix \(A\) of a divisible design found in Appendix \ref{Sec:DivDesHadMat}. This divisible design has parameters \((24,10,6,3,3,8)\), and $A$ is actually the adjacency matrix of a divisible design graph with the same parameters (see \cite[Construction 4.9]{haemers2011divisible}). Label the blocks of the divisible design by \(\B = \{B_1,B_2,\dots,B_{24}\}\) and the points by \(\P = \{P_1,P_2,\dots,P_{24}\}\) in order of the incidence matrix \(A\). Further, let \(\B_1 = \{B_3,B_4,B_7,B_8\}\), \(\P_1 = \{P_1,P_2,P_5,P_6\}\) and \(\P_2 = \{P_{17},P_{18},\dots,P_{24}\}\). The incidence matrix \(A'\) of the divisible design after CS-switching is also included in Appendix \ref{Sec:DivDesHadMat}. This is again a \((24,10,6,3,3,8)\) divisible design that is non-isomorphic to the divisible design with incidence matrix \(A\). The blocks of \(\B_1\) are indicated in grey.
\end{example}

 In Section \ref{sec:applications}, we will illustrate some applications of the switching method for obtaining divisible designs presented in Theorem \ref{Thm:CS-switchDivDes}.
%%%%%%%%%%%%%%%%%%%%%%%%%%%%%%%%%
\section{Linking graph switching to design switching}
%%%%%%%%%%%%%%%%%%%%%%%%%%%%%%%%%

%%%%%%%%%%%%%%%%%%%%%%%%%%%%%%%%%
\subsection{Switching the incidence graph of a Menon design with an oval}\label{Sec:GraphBT16}%%%%%%%%%%%%%%%%%%%%%%%%%%%%%%%%%

In \cite{orrick2008switching}, Orrick mentions a link between his switching a closed quadruple method and the switching an oval method for symmetric $(25,9,3)$ block designs, introduced by Denniston in \cite{denniston1982enumeration}.

Recall that for $2$-$(25,9,3)$ designs, an oval (see Definition \ref{Def:oval}) contains four points for which every block contains no or two of those points. When considering the incidence matrix of the design, this oval will correspond to four rows in which every column has either two or zero $1$s. 
Replace all $0$s in the incidence matrix by $-1$, and the rows of the oval satisfy the definition of a closed quadruple. This is also the case when we replace all $0$s with $1$s and all $1$s with $-1$s.
Denniston's switching an oval (see Definition \ref{Def:switchOval}) is now equivalent to switching a closed quadruple in the \((1,-1)\)-matrix related to the incidence matrix. 

Notice that the $(1,-1)$-matrix we get in this specific case is a square matrix of order 25, which cannot be a Hadamard matrix, but the same idea can be used for Menon designs, which do come from Hadamard matrices. 

First, the definition of switching an oval must be generalized to be able to use it on other designs. To do this we need an extra condition for the oval. It is namely not enough that no three points are incident with the same block. The proof that switching an oval again delivers a symmetric 2-design uses the fact that every block contains no or two points of the oval.
This means that for every point in the oval, the number of blocks through that point and any other point of the oval equals the total number of blocks incident with that point. This means \(3\lambda = k\).

\begin{definition}[Switching an oval (generalized)]
	Let \(\D\) be a symmetric $2$-$(v,3\lambda,\lambda)$
     design that contains an oval \(\{P,Q,R,S\}\). Take all blocks that are incident with both \(P\) and \(Q\), and all blocks that are incident with both \(R\) and \(S\). Switch adjacency between these blocks and the oval. This is a generalization of \emph{switching an oval}.
\end{definition}

Now we are ready to state the next result, whose proof is similar to the one of Theorem \ref{Thm:switchOval}, but for general parameters.

\begin{theorem}
	The result of switching an oval of a symmetric $2$-$(v,3\lambda,\lambda)$ design is again a symmetric $2$-$(v,3\lambda,\lambda)$ design.
\end{theorem}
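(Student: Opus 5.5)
We check directly that, after the switch, every block still has $3\lambda$ points and every pair of points still lies in exactly $\lambda$ common blocks. First we fix the local structure around the oval $O=\{P,Q,R,S\}$. Since $k=3\lambda$, the $r=k=3\lambda$ blocks through $P$ split into three disjoint families of $\lambda$ blocks, through $PQ$, $PR$ and $PS$. These families are disjoint because no block contains three points of $O$, and together they exhaust the blocks through $P$ because $3\lambda=k$. So every block meets $O$ in $0$ or $2$ points. The blocks meeting $O$ therefore form six families $\B_{XY}$, one for each pair $\{X,Y\}\subset O$, each of size $\lambda$. Switching interchanges incidence with $O$ on $\B_{PQ}\cup\B_{RS}$. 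A block of $\B_{PQ}$ then contains $R,S$ instead of $P,Q$, and a block of $\B_{RS}$ contains $P,Q$ instead of $R,S$. Hence block sizes stay $3\lambda$, and the block set is unchanged in number, so the new structure is still symmetric.

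Next we count pairs of points, splitting into three cases according to how many of the two points lie in $O$.

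\emph{Both points outside $O$.} Incidence with these points is untouched, so the count stays $\lambda$.

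\emph{Both points in $O$.} Take for instance $P,Q$. In $\D_1$ the blocks containing both are exactly the old $\B_{RS}$, so there are $\lambda$ of them. For $P,R$, no block of $\B_{PQ}\cup\B_{RS}$ contains both $P$ and $R$ after switching, and the blocks of $\B_{PR}$ are unchanged, giving $\lambda$. The remaining pairs follow by symmetry.

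\emph{One point $X\in O$ and one point $T\notin O$.} This is the case needing an argument. Take $X=P$. In $\D_1$ the blocks through $P$ are $\B_{PR}\cup\B_{PS}\cup\B_{RS}$, whereas before they were $\B_{PR}\cup\B_{PS}\cup\B_{PQ}$. So the count is preserved exactly when $T$ lies in as many blocks of $\B_{RS}$ as of $\B_{PQ}$. Write $a_{XY}=|\{B\in\B_{XY}: T\in B\}|$. The point $T$ lies in $\lambda$ blocks with each $X\in O$, which gives four equations:
\begin{align*}
a_{PQ}+a_{PR}+a_{PS}&=\lambda, & a_{PQ}+a_{QR}+a_{QS}&=\lambda,\\
a_{PR}+a_{QR}+a_{RS}&=\lambda, & a_{PS}+a_{QS}+a_{RS}&=\lambda.
\end{align*}
Adding the first two equations and subtracting the last two gives $2a_{PQ}-2a_{RS}=0$, so $a_{PQ}=a_{RS}$. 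The same identity settles the cases $X=Q$, $R$ and $S$, since each of them has the same set difference $\B_{PQ}$ versus $\B_{RS}$.

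This shows that every pair of points lies in exactly $\lambda$ blocks, so the switched structure is a symmetric $2$-$(v,3\lambda,\lambda)$ design. The main obstacle is the mixed case, and the key step there is the linear identity $a_{PQ}=a_{RS}$.
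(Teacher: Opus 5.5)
Your proof is correct, but it takes a different route from the paper's.

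The paper works on the dual side. It notes that block sizes and the numbers of points and blocks are unchanged, and then only checks that any two blocks still meet in $\lambda$ points. Only the $2\lambda$ blocks of $\B_{PQ}\cup\B_{RS}$ change, and only on the oval, so every case reduces to comparing intersections with $\{P,Q,R,S\}$. For example, a block through $P,Q$ and a block through $P,R$ lose the common point $P$ but gain $R$. No counting is needed. The price is twofold:
\begin{enumerate}
\item it tacitly uses the standard fact that a square incidence structure with constant block size $k$, in which any two blocks meet in $\lambda$ points, is a symmetric $2$-$(v,k,\lambda)$ design;
\item it uses the property that every block meets the oval in $0$ or $2$ points without deriving it in the proof (it is only motivated informally before the definition).
\end{enumerate}

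You instead verify the defining condition on pairs of points directly, so no duality result is needed. You also derive the $0$-or-$2$ property explicitly from $r=k=3\lambda$. The cost is the mixed case $X\in O$, $T\notin O$. There the change in the set of blocks through $X$ is not local to $T$, and you need the counting identity $a_{PQ}=a_{RS}$ obtained from the four $\lambda$-equations. Your derivation of that identity is correct.
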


\begin{proof}
	It is clear that every block still has \(k\) points after switching and the number of points and blocks doesn't change.
	We just need to check if every two blocks intersect in $\lambda$ points.
	Incidence only changes for the $2\lambda$ blocks that contain \(P\) and \(Q\) or \(R\) and \(S\).
	Let \(\B_1\) be the set of these $2\lambda$ blocks and \(B_1, B_2 \in \B_1\).
	If \(B_1\) contains \(P\) and \(Q\) and \(B_2\) contains \(R\) and \(S\), then the intersection remains the same after switching.
	If \(B_1\) and \(B_2\) both contain the same points of the oval, the intersection loses two points, but also gains two points after switching.
	Now consider \(B_1\) to be a block of \(\B\) and \(B_2\) not.
	If \(B_2\) contains no points of the oval the intersection with \(B_1\) is left unchanged.
	If \(B_2\) is incident with two points of the oval, without loss of generality we can say \(B_2\) contains \(P\) and \(R\) and \(B_1\) contains \(P\) and \(Q\). Then the intersection looses the point \(P\), but gains the point \(R\). Therefore the size of the intersection remains the same.
\end{proof}

For Bush-type Hadamard matrices of order 16 specifically, switching a closed quadruple, switching an oval and GM-switching can be related.
Indeed, take for example the Bush-type Hadamard matrix \(H\) \eqref{Eq:BT16} of order 16. The Menon design from \(H\) has parameters $(16,6,2)$. In the incidence graph, point \(i\) and block \(j\) are adjacent iff \(H_{ij} = -1\).
The first four rows form a \emph{closed quadruple} of the matrix, and therefore also an oval of the corresponding Menon design. Let \(D\) be the set of vertices corresponding to the first four rows. Choose \(C_1, \dots, C_9\) as indicated in \eqref{Eq:BT16}. One can easily check that \(\{D,C_1, \dots, C_9\}\) is a GM-switching partition of the incidence graph. GM-switching in this example is equivalent to switching a closed quadruple in the second field of the Hadamard matrix, and to switching an oval of the Menon design.
\\
{\small{
\begin{equation}\label{Eq:BT16}
	H=
	\begin{pNiceMatrix}
		1&1&1&1&-&1&-&1&-&-&1&1&1&-&-&1\\
		1&1&1&1&1&-&1&-&-&-&1&1&-&1&1&-\\
		1&1&1&1&-&1&-&1&1&1&-&-&-&1&1&-\\
		1&1&1&1&1&-&1&-&1&1&-&-&1&-&-&1\\
		\hline
		-&1&-&1&1&1&1&1&1&-&-&1&-&-&1&1\\
		1&-&1&-&1&1&1&1&-&1&1&-&-&-&1&1\\
		-&1&-&1&1&1&1&1&-&1&1&-&1&1&-&-\\
		1&-&1&-&1&1&1&1&1&-&-&1&1&1&-&-\\
		\hline
		-&-&1&1&1&-&-&1&1&1&1&1&-&1&-&1\\
		-&-&1&1&-&1&1&-&1&1&1&1&1&-&1&-\\
		1&1&-&-&1&-&-&1&1&1&1&1&1&-&1&-\\
		1&1&-&-&-&1&1&-&1&1&1&1&-&1&-&1\\
		\hline
		1&-&-&1&-&-&1&1&-&1&-&1&1&1&1&1\\
		-&1&1&-&-&-&1&1&1&-&1&-&1&1&1&1\\
		-&1&1&-&1&1&-&-&-&1&-&1&1&1&1&1\\
		1&-&-&1&1&1&-&-&1&-&1&-&1&1&1&1\\
		\CodeAfter
		\tikz \draw  (1-|5) -- (17-|5);
		\tikz \draw  (1-|9) -- (17-|9);
		\tikz \draw  (1-|13) -- (17-|13);
		\SubMatrix.{1-1}{4-16}\}[right-xshift=0.5em,name=A] 
		\tikz \node [right] at (A-right.east) {$D$} ;
		\SubMatrix.{5-1}{8-16}\}[right-xshift=0.5em,name=B] 
		\tikz \node [right] at (B-right.east) {$C_2$} ;
		\SubMatrix.{9-1}{12-16}\}[right-xshift=0.5em,name=C] 
		\tikz \node [right] at (C-right.east) {$C_3$} ;
		\SubMatrix.{13-1}{16-16}\}[right-xshift=0.5em,name=D] 
		\tikz \node [right] at (D-right.east) {$C_4$} ;
		\OverBrace[yshift=1.5mm,shorten]{1-1}{1-4}{C_5}
		\OverBrace[yshift=1.5mm,shorten]{1-5}{5-8}{C_1}
		\OverBrace[yshift=1.5mm,shorten]{1-9}{9-10}{C_6}
		\OverBrace[yshift=1.5mm,shorten]{1-11}{11-12}{C_7}
		\OverBrace[yshift=1.5mm,shorten]{1-14}{14-15}{C_9}
		\SubMatrix.{1-13}{1-13}.[name=E] 
		\tikz \node [above, yshift=5mm] at (E) {$C_8$} ;
		\SubMatrix.{1-16}{1-16}.[name=J] 
		\tikz \node [above, yshift=5mm] at (J) {$C_8$} ;
	\end{pNiceMatrix}   
\end{equation}
}}
However, we note that this result is not generalizable, since switching an oval requires \(3(n^2-n) = 2n^2-n\), thus \(n=2\).

%%%%%%%%%%%%%%%%%%%%%%%%%%%%%%%%%
\subsection{Switching incidence graphs of 2-designs}
%%%%%%%%%%%%%%%%%%%%%%%%%%%%%%%%%

%%%%%%%%%%%%%%%%%%%%%%%%%%%%%%%%%
\subsubsection{GM-switching}
%%%%%%%%%%%%%%%%%%%%%%%%%%%%%%%%%
Consider the CS-switching for 2-designs, see Definition \ref{Def:switch2design}. The set \(\{\B_1,\P\setminus(\P_1\cup\P_2),\B\setminus\B_1,\P_1,\P_2\}\) defines a partition of the vertex set of the incidence graph that satisfies some conditions:

\begin{enumerate}
	\item Every vertex in \(\B_1\) is adjacent to all, no or half of the vertices of \(\P\setminus(\P_1\cup\P_2), \B\setminus\B_1, \P_1, \P_2\).
	\item Every vertex of \(\P_1\) has zero neighbours in \(\P\setminus(\P_1\cup\P_2)\) and \(\P_2\), and \(r\) neighbours in \(\B\setminus\B_1\).
	\item Every vertex of \(\P_2\) has zero neighbours in \(\P\setminus(\P_1\cup\P_2)\) and \(\P_1\), and \(r-|\B_1|\) neighbours in \(\B\setminus\B_1\).
	\item Every vertex in \(\P\setminus(\P_1\cup\P_2)\) has zero neighbours in \(\P_1\) and \(\P_2\), and \(r-\frac{|\B_1|}{2}\) neighbours in \(\B\setminus\B_1\).
	\item Every vertex in \(\B\setminus\B_1\) has zero neighbours in \(\B_1\).
\end{enumerate}

These conditions align with the conditions needed for a partition of a vertex set to be a GM-switching partition. We only need the following extra conditions to be satisfied for \(\{\B_1,\P\setminus(\P_1\cup\P_2),\B\setminus\B_1,\P_1,\P_2\}\) to be a switching partition:

\begin{enumerate}
	\item Every vertex in \(\B\setminus\B_1\) has the same number of vertices in \(\P_1\).
	\item Every vertex in \(\B\setminus\B_1\) has the same number of vertices in \(\P_2\).
	\item Every vertex in \(\B\setminus\B_1\) has the same number of vertices in \(\P\setminus(\P_1\cup\P_2)\).
\end{enumerate}
Notice that since every vertex of \(\B\setminus\B_1\) has \(k\) neighbours in \(\P\), it suffices for two of the three conditions to be satisfied.

In the design this translates to the existence of two constants \(c_1\) and \(c_2\) such that every block in \(\B\setminus\B_1\) contains \(c_1\) points of \(\P_1\) and \(c_2\) points of \(\P_2\).
With this extra condition, a GM-switching partition can be found in the incidence graph of a \(2\)-design, such that GM-switching the graph with respect to this partition is equivalent to CS-switching the design. This is illustrated in the next result.

\begin{theorem}\label{Thm:2desSwitch=GM}
	Let \(\D = (\P,\B,I)\) be a 2-design and let \(\B_1 \subset \B\) such that there are \(\P_1,\P_2 \subset \P\) with the following properties:
	\begin{enumerate}
		\item $(P,B) \notin {I}$ for every $(P,B) \in \mathcal{P}_1 \times \mathcal{B}_1$,
		\item $(P,B) \in {I}$ for every $(P,B) \in \mathcal{P}_2 \times \mathcal{B}_1$,
		\item $|\{B\in \mathcal{B}_1 : (P,B) \in {I}\}| = |\{B\in \mathcal{B}_1 : (P,B) \notin {I}\}|$ for every $P \in \mathcal{P}\setminus(\mathcal{P}_1 \cup \mathcal{P}_2)$,
		\item \(\exists \, c_1: \; |\{P\in \P_1 : (P,B) \in {I}\}| = c_1\) for every \(B \in \B\setminus\B_1\),
		\item \(\exists \, c_2: \; |\{P\in \P_2 : (P,B) \in {I}\}| = c_2\) for every \(B \in \B\setminus\B_1\).
	\end{enumerate}
	Then \(\{\B_1,\P\setminus(\P_1\cup\P_2),\B\setminus\B_1,\P_1,\P_2\}\) is a GM-switching partition for the incidence graph \(IG(\D)\) and CS-switching \(\D\) with respect to the switching set \(\B_1\) is equivalent to GM-switching \(IG(\D)\) with respect to the switching partition \(\{\B_1,\P\setminus(\P_1\cup\P_2),\B\setminus\B_1,\P_1,\P_2\}\).
\end{theorem}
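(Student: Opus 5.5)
The plan is to verify the two bullet conditions of Definition~\ref{Def:GMswitch} for the partition with $D=\B_1$ and $C_1=\P\setminus(\P_1\cup\P_2)$, $C_2=\B\setminus\B_1$, $C_3=\P_1$, $C_4=\P_2$, and then to compare the resulting edge changes with Definition~\ref{Def:switch2design}. Since $IG(\D)$ is bipartite, many neighbour counts vanish automatically. Point vertices have no neighbours among the point cells $C_1,C_3,C_4$, and block vertices have no neighbours in the block cell $C_2$. So for the regularity condition between the cells $C_i$ only the mixed pairs need checking.

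The counts I would record are the following.
\begin{itemize}
\item A point of $\P_1$ has $r$ neighbours in $\B\setminus\B_1$, because it lies in no block of $\B_1$.
\item A point of $\P_2$ has $r-|\B_1|$ neighbours there.
\item A point of $C_1$ has $r-|\B_1|/2$ neighbours there, by property~3.
\end{itemize}
Conversely, every block in $\B\setminus\B_1$ has $c_1$ neighbours in $\P_1$ and $c_2$ in $\P_2$, by properties~4 and~5. Since each block has $k$ points, it then has $k-c_1-c_2$ neighbours in $C_1$. This settles the first GM condition for all pairs of cells.

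For the second condition, a vertex $B\in D=\B_1$ has no neighbours in $C_2$, since the graph is bipartite. It has $0$ neighbours in $\P_1$ by property~1 and all $|\P_2|$ neighbours in $\P_2$ by property~2. For $C_1$ I would reuse the counting from the proof of Theorem~\ref{Thm:CS-switchDivDes}. Double counting incidences between $C_1$ and $\B_1$ via property~3 gives $k-|\P_2| = |C_1|/2$. Hence every $B\in\B_1$ is adjacent to exactly half of $C_1$. The point to be careful about is that this count is for each individual block of $\B_1$, not just on average. It holds because each block has exactly $k$ points, $|\P_2|$ of which lie in $\P_2$ and none in $\P_1$. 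I expect this to be the only step requiring genuine argument.

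Finally I would compare the operations. GM-switching complements adjacency between each $B\in\B_1$ and exactly those cells in which $B$ has half its neighbours. By the above, the only such cell is $C_1=\P\setminus(\P_1\cup\P_2)$: for $\P_1$ and $\P_2$ the counts are $0$ and full, and $C_2$ lies on the same side of the bipartition. No other edges change. This matches conditions~1--3 in the definition of $\D_1$ exactly. So the incidence graph of the CS-switched design equals the GM-switched graph, i.e.\ $IG(\D_1)$ is the result of GM-switching $IG(\D)$ with respect to this partition.

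A small caveat is degenerate cases, such as empty cells or $|C_i|$ odd. The first can be handled by dropping empty parts. The second cannot occur for $C_1$ when $\B_1\neq\emptyset$, since $|C_1|=2(k-|\P_2|)$.
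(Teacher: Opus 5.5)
Your proposal is correct and follows the same route as the paper. Like the paper, you take $D=\B_1$, verify the GM-switching conditions using bipartiteness, the replication number $r$ and properties 1--5, and then observe that $\P\setminus(\P_1\cup\P_2)$ is the only cell in which each block of $\B_1$ has exactly half its neighbours, so GM-switching reverses exactly the incidences that CS-switching reverses. Your explicit per-block derivation $k-|\P_2|=|\P\setminus(\P_1\cup\P_2)|/2$ is a step the paper's proof leaves implicit, relying on the proof of Theorem~\ref{Thm:CS-switchDivDes}, so your version is slightly more complete.
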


\begin{proof}
	As explained above, \(\{\B_1,\P\setminus(\P_1\cup\P_2),\B\setminus\B_1,\P_1,\P_2\}\) satisfies all conditions to be a GM-switching partition.
	When switching the 2-design, all incidences between \(\B_1\) and \(\P\setminus(\P_1\cup\P_2)\) are reversed.
	In the incidence graph this is equal to switching adjacencies between the vertices of \(\B_1\) and \(\P\setminus(\P_1\cup\P_2)\).
	When performing GM-switching on \(IG(\D)\) adjacency is reversed between \(\B_1\) and sets of \(\{\P\setminus(\P_1\cup\P_2),\B\setminus\B_1,\P_1,\P_2\}\) for which every vertex of \(\B_1\) is adjacent to half of the vertices of that set. This condition is only satisfied for set \(\P\setminus(\P_1\cup\P_2)\). Therefore, using GM-switching gives the same result as applying CS-switching on the 2-design.
\end{proof}

%%%%%%%%%%%%%%%%%%%%%%%%%%%%%%%%%
\subsubsection{WQH-switching}
%%%%%%%%%%%%%%%%%%%%%%%%%%%%%%%%%
Since in the previous section we have shown an equivalence between GM-switching and CS-switching using the incidence matrix, it is natural to investigate if a similar result can be obtained when considering WQH-switching (see Definition \ref{Def:WQH-switch}).

Assume we find a WQH-switching set \(\{C_1,C_2\}\). 
We will consider two possible cases: \(C_1\cup C_2 \subseteq \B\) and  \(C_1\cup C_2 \subseteq \P\).

%%%%%%%%%%%%%%%%%%%%%%%%%%%%%%%%%
\subsubsection*{Case 1: \(C_1\cup C_2 \subseteq \B\)}
%%%%%%%%%%%%%%%%%%%%%%%%%%%%%%%%%

By definition, we have 
\begin{itemize}
	\item \(|C_1| = |C_2|\).
	\item There is a constant \(c\) such that for all \(i,j \in \{1,2\}\), \(i\neq j\) and every \(v \in C_i: |C_i(v)| - |C_j(v)| = c\). 
	\item For every vertex \(u \in V \setminus (C_1 \cup C_2)\) either:
	\begin{enumerate}
		\item \(|C_1(u)| = |C_1|\) and \(|C_2(u)| = 0\),\label{Def:WQH-switch2(a)}
		\item \(|C_1(u)| = 0\) and \(|C_2(u)| = |C_2|\),\label{Def:WQH-switch2(b)}
		\item or \(|C_1(u)| = |C_2(u)|\).\label{Def:WQH-switch2(c)}
	\end{enumerate}
\end{itemize}

The constant \(c\) is zero, since \(C_1\cup C_2 \subseteq \B\) and the incidence graph is bipartite.
For the same reason, every \(u \in (V \setminus (C_1\cup C_2))\cap \B\) satisfies condition \eqref{Def:WQH-switch2(c)} with \(|C_1(u)|=|C_2(u)|=0\).

Let \(\P_1\) be the set of points that satisfy condition \eqref{Def:WQH-switch2(a)}, \(P_2\) the set of points satisfying condition \eqref{Def:WQH-switch2(b)} and \(\P_3\) the set of points that satisfy condition \eqref{Def:WQH-switch2(c)}.
The incidence matrix \(M\) then appears as follows:

\[M = \begin{pNiceMatrix}[first-row,last-col]
	C_1 & C_2 & & \\
	J & O & M_1& \P_1\\
	O & J & M_2 & \P_2\\
	N_1 & N_2 & C & \P_3
\end{pNiceMatrix},\]
where \(J\) is the all-one matrix, \(O\) is the all-zero matrix and for every row of \(\P_3\), the row sum in \(N_1\) is the same as in \(N_2\). Because every block contains \(k\) points and \(|\P_1| = |\P_2|\), every column of \(N_1\) and \(N_2\) has the same column sum.
After switching, we obtain the following incidence matrix \(M'\)
\[M' = \begin{pNiceMatrix}[first-row, last-col]
	C_1&C_2&&\\
	O & J & M_1 & \P_1\\
	J & O & M_2 & \P_2\\
	N_1 & N_2 & C & \P_3
\end{pNiceMatrix}\]

\medskip

\begin{theorem}\label{Thm:WQHSwitchDesigns}
	Let \(\{C_1,C_2\}\) be a WQH-switching set of the incidence graph \(IG(\D)\) of a $2$-$(v,k,\lambda)$ design \(\D\) with \(C_1\cup C_2 \subseteq \B\). Let \(\P_1, \P_2, \P_3\) be as defined above. If \(|\P_1| = |\P_2|\), then the graph obtained by WQH-switching \(IG(\D)\) is again the incidence graph of a $2$-$(v,k,\lambda)$ design \(\D'\).
\end{theorem}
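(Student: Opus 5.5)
We work directly with the incidence matrices \(M\) and \(M'\) displayed above. The switching only changes the columns indexed by \(C_1\cup C_2\): in the rows of \(\P_1\) the blocks of \(C_1\) lose all their points and the blocks of \(C_2\) gain all of them, and symmetrically in the rows of \(\P_2\). The point set, the number of blocks and the number of points are unchanged. We therefore have to check two things: every column of \(M'\) has \(k\) ones, and every pair of distinct rows of \(M'\) has inner product \(\lambda\).

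\emph{Block sizes.} Put \(p=|\P_1|=|\P_2|\). A column of \(C_1\) in \(M\) has \(p\) ones in the \(\P_1\) rows, none in the \(\P_2\) rows, and its entries from \(N_1\) in the \(\P_3\) rows. In \(M'\) the same column has none in the \(\P_1\) rows, \(p\) ones in the \(\P_2\) rows, and the same \(N_1\) part. So its weight is unchanged, and the same argument works for \(C_2\). This is exactly the place where the hypothesis \(|\P_1|=|\P_2|\) is used.

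\emph{Pairs of points.} For two rows \(x,y\), write \(\langle x,y\rangle'\) for their inner product in \(M'\). Only the contributions from the columns in \(C_1\cup C_2\) can change, and the row of a point in \(\P_3\) is not changed at all. We split into cases according to where the two points lie.
\begin{enumerate}
\item Both points lie in \(\P_1\), or both in \(\P_2\). The contribution from \(C_1\cup C_2\) is \(|C_1|\) before and \(|C_2|\) after (or the other way round). These are equal because \(|C_1|=|C_2|\).
\item One point lies in \(\P_1\) and the other in \(\P_2\). The contribution is \(0\) both before and after.
\item One point lies in \(\P_1\) and the other, \(z\), lies in \(\P_3\). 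The contribution is \(|C_1(z)|\) before and \(|C_2(z)|\) after. By condition (c) of the WQH-switching set these are equal, and the same holds for \(\P_2\).
\item Both points lie in \(\P_3\). Their rows are untouched, so the inner product is unchanged.
\end{enumerate}

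Hence every pair of points still lies in exactly \(\lambda\) common blocks, and \(M'\) is the incidence matrix of a \(2\)-\((v,k,\lambda)\) design \(\D'\). The graph obtained by WQH-switching \(IG(\D)\) is precisely \(IG(\D')\), since only point–block adjacencies between \(C_1\cup C_2\) and \(\P_1\cup\P_2\) were flipped. The only step needing care is the block-size count, because it is the only place where \(|\P_1|=|\P_2|\) enters.
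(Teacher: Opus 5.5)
Your proof is correct and follows essentially the same route as the paper. Block sizes are preserved exactly because \(|\mathcal{P}_1|=|\mathcal{P}_2|\), and the common-block count for two points is checked by the same four cases: both in \(\mathcal{P}_3\); both in \(\mathcal{P}_1\) or both in \(\mathcal{P}_2\), using \(|C_1|=|C_2|\); one in \(\mathcal{P}_1\) and one in \(\mathcal{P}_2\); and one in \(\mathcal{P}_i\) with the other in \(\mathcal{P}_3\), using condition (c).
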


\begin{proof}
	Let \(G'\) be the graph obtained after WQH-switching \(IG(\D)\). Let \(\P\) be the vertex set corresponding to the points of the 2-design and \(\B\) the vertex set corresponding to the blocks. Let \(\D'\) be the incidence structure obtained by taking the vertices of \(\P\) in \(G'\) as points and the vertices of \(\B\) in \(G'\) as blocks. A point \(P\) and a block \(B\) are incident iff the corresponding vertices \(P\) and \(B\) are adjacent.
	
	The number of vertices in \(\P\) remains the same, so the number of points is still \(v\).
	Because \(|\P_1| = |\P_2|\), the number of points in a given block remains \(k\).
	
	We prove that two points are still contained in \(\lambda\) common blocks.
	Let \(P_1,P_2\) be two vertices corresponding to two points \(P_1\) and \(P_2\). If \(P_1,P_2\) are both in \(\P_3\), then incidence does not change.
	If both are in \(\P_1\) or both are in \(\P_2\), then the two points lay in \(\lambda-|C_1|+|C_2|\), respectively \(\lambda - |C_2|+|C_1|\), common blocks. Since \(|C_1|=|C_2|\), this is still \(\lambda\).
	If, without loss of generality, \(P_1 \in \P_1\) and \(P_2\in \P_2\), then no blocks of \(C_1\cup C_2\) contain both \(P_1\) and \(P_2\) before or after switching, so the number of common blocks remains unchanged.
	Assume, without loss of generality, \(P_1 \in \P_i, \, i \in \{1,2\}\), and \(P_2 \in \P_3\). Because every vertex in \(\P_3\) satisfies condition \eqref{Def:WQH-switch2(c)} in the definition of WQH-switching, every point of \(\P_3\) lies in the same number of blocks of \(C_1\) as it does of \(C_2\). Therefore, the number of common blocks does not change after switching.
	The incidence structure \(\D'\) obtained from \(G'\) is thus again a $2$-$(v,k,\lambda)$ design.
\end{proof}

Using the previous result, we can now define a new switching method for 2-designs.

\begin{definition} \label{Def:WQHswitch2designs}
	Let \(\D=(\P,\B,I)\) be a 2-design. Let \(C_1, C_2 \subset \B\) be two sets of blocks with \(|C_1|=|C_2|\) such that there are \(\P_1,\P_2 \subset \P\) with the following properties:
	\begin{enumerate}
		\item \(|\P_1|=|\P_2|\),
		\item \((P,B) \in I\) for every \((P,B) \in \P_i\times C_i, \, i \in \{1,2\} \),
		\item \((P,B) \not\in I\) for every \((P,B) \in \P_i\times C_j, \, i,j \in \{1,2\}, i \neq j\),
		\item \(|\{B\in C_1: (P,B)\in I\}| = |\{B\in C_2:(P,B)\} \in I|\) for every \(P \in \P\setminus (\P_1\cup \P_2)\).
	\end{enumerate}
	Then \(\{C_1,C_2\}\) is a switching set of \(\D\).
	
	\(\D' = (\P,\B,I_s)\) is obtained from the 2-design \(\D\) by switching with respect to \(\{C_1,C_2\}\) if:
	\begin{enumerate}
		\item \((P,B) \in I_s\) for every \((P,B) \in \P_i \times C_j, \, i,j \in \{1,2\}, i \neq j\),
		\item \((P,B) \not\in I_s\) for every \((P,B) \in \P_i\times C_i, \, i \in \{1,2\} \),
		\item \((P,B) \in I_s \iff (P,B) \in I\) for every \((P,B) \in \P \times \B\setminus(C_1\cup C_2)\) and every \((P,B) \in \P\setminus (\P_1 \cup \P_2) \times \B\).
	\end{enumerate}
\end{definition}

\begin{theorem}
	Let \(\D\) be a $2$-$(v,k,\lambda)$ design and let \(\{C_1,C_2\}\) be a switching set of \(\D\). The design \(\D'\) derived from \(\D\) by switching with respect to \(\{C_1,C_2\}\) is again a $2$-$(v,k,\lambda)$ design.
\end{theorem}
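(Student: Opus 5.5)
The plan is to reduce the statement to Theorem~\ref{Thm:WQHSwitchDesigns}, and to keep a direct counting check as a fallback. The first step is to show that a switching set \(\{C_1,C_2\}\) of \(\D\) in the sense of Definition~\ref{Def:WQHswitch2designs} is a WQH-switching set of \(IG(\D)\) with \(C_1\cup C_2\subseteq\B\). We already have \(|C_1|=|C_2|\). Because \(IG(\D)\) is bipartite, no vertex of \(C_i\) has neighbours in \(C_1\cup C_2\), so the constant \(c\) is \(0\). For the vertices outside \(C_1\cup C_2\) we check the three alternatives:
\begin{itemize}
\item every block outside \(C_1\cup C_2\) has no neighbours in \(C_1\cup C_2\), so it satisfies \(|C_1(u)|=|C_2(u)|=0\);
\item a point of \(\P_1\) is incident with all of \(C_1\) and none of \(C_2\), so it satisfies alternative (a);
\item symmetrically, a point of \(\P_2\) satisfies alternative (b);
\item a point outside \(\P_1\cup\P_2\) satisfies alternative (c) by the fourth property of the switching set.
\end{itemize}
Finally, \(|\P_1|=|\P_2|\) holds by the first property.

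The second step is to identify the sets. One subtlety is that a point outside \(\P_1\cup\P_2\) might also satisfy alternative (a) or (b), namely when it is incident with all of \(C_1\) and none of \(C_2\), or vice versa. Then it is not literally true that the sets "\(\P_1,\P_2,\P_3\) as defined above" coincide with the given \(\P_1,\P_2\). This only happens if \(|C_1|=0\), since such a point has \(|C_1(P)|=|C_1|\) and \(|C_2(P)|=0\), and alternative (c) then forces \(|C_1|=0\). In that degenerate case nothing is switched and there is nothing to prove. So we may assume \(|C_1|>0\), and then the sets coincide.

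The third step is to compare the two switchings. WQH-switching changes adjacency exactly between \(C_1\cup C_2\) and the vertices satisfying (a) or (b), that is, \(\P_1\cup\P_2\). This turns \(J\) into \(O\) on \(\P_i\times C_i\) and \(O\) into \(J\) on \(\P_i\times C_j\), and leaves everything else unchanged. This is precisely the incidence relation \(I_s\) of Definition~\ref{Def:WQHswitch2designs}. Hence \(IG(\D')\) is the WQH-switched graph, and Theorem~\ref{Thm:WQHSwitchDesigns} gives that \(\D'\) is a 2-\((v,k,\lambda)\) design.

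As a safeguard, and perhaps as the cleaner written proof, I would also verify the design property directly, mirroring the proof of Theorem~\ref{Thm:WQHSwitchDesigns}.
\begin{itemize}
\item \emph{Block sizes.} A block in \(C_1\) loses the \(|\P_1|\) points of \(\P_1\) and gains the \(|\P_2|\) points of \(\P_2\), so its size is still \(k\) since \(|\P_1|=|\P_2|\). Blocks in \(C_2\) are symmetric.
\item \emph{Two points in the same \(\P_i\).} They lose the \(|C_i|\) common blocks of \(C_i\) and gain the \(|C_j|\) blocks of \(C_j\), so the count is unchanged since \(|C_1|=|C_2|\).
\item \emph{One point in \(\P_1\) and one in \(\P_2\).} They share no block of \(C_1\cup C_2\) before or after switching.
\item \emph{A point \(P\in\P_i\) and a point \(Q\notin\P_1\cup\P_2\).} Their common blocks within \(C_1\cup C_2\) change from \(|C_i(Q)|\) to \(|C_j(Q)|\), which are equal by property 4.
\item \emph{Two points outside \(\P_1\cup\P_2\).} Nothing changes for them.
\end{itemize}
The only real obstacle is the bookkeeping in the set identification of the second step. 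The counting itself is routine.
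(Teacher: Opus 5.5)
Your proposal is correct and follows the paper's route. The paper's proof is the one-line reduction to Theorem~\ref{Thm:WQHSwitchDesigns}, and your steps only make explicit what it leaves implicit: that a switching set of Definition~\ref{Def:WQHswitch2designs} is a WQH-switching set of $IG(\D)$, and that its induced $\P_1,\P_2$ coincide with the given ones (apart from the trivial case $C_1=C_2=\emptyset$). Your direct counting check repeats the argument from the proof of Theorem~\ref{Thm:WQHSwitchDesigns}, so it adds robustness but no new idea.
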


\begin{proof}
	This follows directly from Theorem \ref{Thm:WQHSwitchDesigns}.
\end{proof}

\begin{example}
	Reconsider the Menon design from the Bush-type Hadamard matrix \(H\) of order 16, see  \eqref{Eq:BT16}. Let \(\{C_1,C_2\}\) be a switching partition of the Menon design as shown on the incidence matrix in \eqref{Eq:BT16MenonWQHswitch}. The incidence matrix of the design obtained by switching with respect to \(\{C_1,C_2\}\) is shown in \eqref{Eq:BT16MenonWQHswitchResult}. The obtained design is non-isomorphic to the original Menon design.\\

	\begin{equation}\label{Eq:BT16MenonWQHswitch}
		\begin{pNiceMatrix}
			0&0&0&0&1&0&1&0&\Block[fill=blue!15,rounded-corners]{4-4}{}1&1&0&0&0&1&1&0\\
			0&0&0&0&0&1&0&1&1&1&0&0&1&0&0&1\\
			\hline
			0&0&0&0&1&0&1&0&0&0&1&1&1&0&0&1\\
			0&0&0&0&0&1&0&1&0&0&1&1&0&1&1&0\\
			\hline
			1&0&1&0&0&0&0&0&0&1&1&0&1&1&0&0\\
			0&1&0&1&0&0&0&0&1&0&0&1&1&1&0&0\\
			1&0&1&0&0&0&0&0&1&0&0&1&0&0&1&1\\
			0&1&0&1&0&0&0&0&0&1&1&0&0&0&1&1\\
			1&1&0&0&0&1&1&0&0&0&0&0&1&0&1&0\\
			1&1&0&0&1&0&0&1&0&0&0&0&0&1&0&1\\
			0&0&1&1&0&1&1&0&0&0&0&0&0&1&0&1\\
			0&0&1&1&1&0&0&1&0&0&0&0&1&0&1&0\\
			0&1&1&0&1&1&0&0&1&0&1&0&0&0&0&0\\
			1&0&0&1&1&1&0&0&0&1&0&1&0&0&0&0\\
			1&0&0&1&0&0&1&1&1&0&1&0&0&0&0&0\\
			0&1&1&0&0&0&1&1&0&1&0&1&0&0&0&0\\
			\CodeAfter
			\tikz \draw  (1-|9) -- (17-|9);
			\tikz \draw  (1-|11) -- (17-|11);
			\tikz \draw  (1-|13) -- (17-|13);
			\SubMatrix.{1-1}{2-16}\}[right-xshift=0.5em,name=A] 
			\tikz \node [right] at (A-right.east) {$\P_1$} ;
			\SubMatrix.{3-1}{4-16}\}[right-xshift=0.5em,name=B] 
			\tikz \node [right] at (B-right.east) {$\P_2$} ;
			\OverBrace[yshift=1.5mm,shorten]{1-9}{1-10}{C_1}
			\OverBrace[yshift=1.5mm,shorten]{1-11}{1-12}{C_2}
		\end{pNiceMatrix}
	\end{equation}
	\\
	
	\begin{equation}\label{Eq:BT16MenonWQHswitchResult}
		\begin{pNiceMatrix}
			0&0&0&0&1&0&1&0&\Block[fill=blue!15,rounded-corners]{4-4}{}0&0&1&1&0&1&1&0\\
			0&0&0&0&0&1&0&1&0&0&1&1&1&0&0&1\\
			\hline
			0&0&0&0&1&0&1&0&1&1&0&0&1&0&0&1\\
			0&0&0&0&0&1&0&1&1&1&0&0&0&1&1&0\\
			\hline
			1&0&1&0&0&0&0&0&0&1&1&0&1&1&0&0\\
			0&1&0&1&0&0&0&0&1&0&0&1&1&1&0&0\\
			1&0&1&0&0&0&0&0&1&0&0&1&0&0&1&1\\
			0&1&0&1&0&0&0&0&0&1&1&0&0&0&1&1\\
			1&1&0&0&0&1&1&0&0&0&0&0&1&0&1&0\\
			1&1&0&0&1&0&0&1&0&0&0&0&0&1&0&1\\
			0&0&1&1&0&1&1&0&0&0&0&0&0&1&0&1\\
			0&0&1&1&1&0&0&1&0&0&0&0&1&0&1&0\\
			0&1&1&0&1&1&0&0&1&0&1&0&0&0&0&0\\
			1&0&0&1&1&1&0&0&0&1&0&1&0&0&0&0\\
			1&0&0&1&0&0&1&1&1&0&1&0&0&0&0&0\\
			0&1&1&0&0&0&1&1&0&1&0&1&0&0&0&0\\
			\CodeAfter
			\tikz \draw  (1-|9) -- (17-|9);
			\tikz \draw  (1-|11) -- (17-|11);
			\tikz \draw  (1-|13) -- (17-|13);
			\SubMatrix.{1-1}{2-16}\}[right-xshift=0.5em,name=A] 
			\tikz \node [right] at (A-right.east) {$\P_1$} ;
			\SubMatrix.{3-1}{4-16}\}[right-xshift=0.5em,name=B] 
			\tikz \node [right] at (B-right.east) {$\P_2$} ;
			\OverBrace[yshift=1.5mm,shorten]{1-9}{1-10}{C_1}
			\OverBrace[yshift=1.5mm,shorten]{1-11}{1-12}{C_2}
		\end{pNiceMatrix}
	\end{equation}
	
\end{example}

In \cite{ihringer2019new}, Ihringer and Munemasa defined a WQH-switching set for the block graph of $2$-$(v,k,1)$ designs. We show that this switching is in fact a special case of the switching we defined in Definition \ref{Def:WQHswitch2designs}, with \(|\P_1| = |\P_2| = 1\).

Ihringer and Munemasa's switching set is constructed as follows:
Let \(\D = (\P,\B)\) be a $2$-$(v,k,1)$ design and let \(\D' = (\P',\B')\) be a $2$-$(v',k,1)$ subdesign of \((\P,\B)\).
Take two points \(P_1,P_2 \in \P'\). Let \(C_1\) be all blocks in \(\B'\) that contain \(P_1\) but not \(P_2\), and \(C_2\) all blocks of \(\B'\) that contain \(P_2\) but not \(P_1\).

\begin{theorem}\cite{ihringer2019new}
	The pair \(\{C_1,C_2\}\) is a switching set for the block graph of \(\D\).
\end{theorem}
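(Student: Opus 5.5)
We verify directly the three conditions of Definition~\ref{Def:WQH-switch} for the block graph of \(\D\), whose vertices are the blocks, two blocks being adjacent iff they share a point. Two facts about \(2\)-\((v,k,1)\) designs drive everything. First, any two points lie on a unique block; since \(\D'\) is a subdesign, the block through two points of \(\P'\) belongs to \(\B'\). Second, a block not in \(\B'\) therefore meets \(\P'\) in at most one point. Write \(r'=(v'-1)/(k-1)\) for the replication number of \(\D'\) and \(L\) for the block through \(P_1\) and \(P_2\).

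\textbf{Step 1: \(|C_1|=|C_2|\).} The block \(L\) lies in \(\B'\). Each of \(P_1,P_2\) lies on \(r'\) blocks of \(\B'\), one of which is \(L\). Hence \(|C_1|=|C_2|=r'-1\).

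\textbf{Step 2: the constant \(c\).} Take \(B\in C_1\).
\begin{enumerate}
\item The other \(r'-2\) blocks of \(C_1\) meet \(B\) in \(P_1\), so \(|C_1(B)|=r'-2\).
\item For \(|C_2(B)|\), look at each of the \(k-1\) points \(Q\ne P_1\) of \(B\); note \(Q\neq P_2\) since \(P_2\notin B\). The block \(QP_2\) lies in \(\B'\). It does not contain \(P_1\): otherwise it would be \(L\), and \(B\) would share the two points \(P_1,Q\) with \(L\), forcing \(B=L\). So \(QP_2\in C_2\).
\item Distinct points \(Q,Q'\) of \(B\) give distinct blocks, since otherwise that block would equal \(B\) and contain \(P_2\).
\item Conversely, every block of \(C_2\) meeting \(B\) does so in a point of \(\P'\) other than \(P_1\), so it arises this way.
\end{enumerate}
Thus \(|C_2(B)|=k-1\) and \(|C_1(B)|-|C_2(B)|=r'-k-1\). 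By the symmetric argument for \(B\in C_2\), \(c=r'-k-1\).

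\textbf{Step 3: blocks \(u\notin C_1\cup C_2\).} Blocks of \(C_1\cup C_2\) lie inside \(\P'\), so only the points of \(u\cap\P'\) matter. We split into cases.
\begin{enumerate}
\item If \(u=L\), then \(u\) meets every block of \(C_1\cup C_2\), so condition (c) holds.
\item If \(P_1\in u\) but \(P_2\notin u\), then \(u\notin\B'\) (else \(u\in C_1\)), so \(u\cap\P'=\{P_1\}\). Hence \(u\) meets all of \(C_1\) and none of \(C_2\), which is condition (a). The case \(P_2\in u\), \(P_1\notin u\) symmetrically gives condition (b).
\item If \(u\) contains neither \(P_1\) nor \(P_2\), we show \(|C_1(u)|=|C_2(u)|\) by counting. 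Each point \(Q\in u\cap\P'\) determines the blocks \(QP_1\) and \(QP_2\). These lie in \(C_1\) and \(C_2\) respectively, unless \(Q\in L\), in which case both equal \(L\). Distinct \(Q\) give distinct blocks \(QP_1\), since otherwise \(u\) would be that block and contain \(P_1\); likewise for \(QP_2\). Hence
\[|C_1(u)|=|u\cap\P'|-|u\cap L\cap\P'|=|C_2(u)|.\]
This covers both \(u\in\B'\), where \(|u\cap\P'|=k\), and \(u\notin\B'\), where \(|u\cap\P'|\le1\).
\end{enumerate}

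The only delicate points are the injectivity arguments and the exceptional role of \(L\). These are all instances of the fact that two blocks share at most one point, so I expect no real obstacle beyond careful bookkeeping in Step 3.
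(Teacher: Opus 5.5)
Your proof is correct. The values $|C_1|=|C_2|=r'-1$ and $c=r'-k-1$ are right. The case split in Step~3 is exhaustive: a block outside $C_1\cup C_2$ contains both, exactly one, or neither of $P_1,P_2$, and in the first case it is $L$. Every injectivity claim correctly reduces to the fact that two distinct blocks share at most one point.

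The paper gives no proof of this statement; it is quoted from Ihringer and Munemasa. The nearest in-paper argument is the proof of the next theorem, which counts at the level of points. A point of $\P'\setminus\{P_1,P_2\}$ lies on exactly one block of each $C_i$ if it is off $L$, and on none if it is on $L$. A point outside $\P'$ lies on no block of $C_1\cup C_2$.

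Your Step~3 is that count summed over the points of $u$. Since distinct blocks meet in at most one point, $|C_i(u)|=\sum_{Q\in u}|\{B\in C_i : Q\in B\}|$ for every $u\notin C_1\cup C_2$. Hence $|C_1(u)|-|C_2(u)|=|C_1|(\varepsilon_1-\varepsilon_2)$, where $\varepsilon_i=1$ if $P_i\in u$ and $\varepsilon_i=0$ otherwise. This gives (a), (b) or (c) in one stroke, without a case split.

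So your explicit bijections buy concreteness. The summation view shows something more structural: for a $2$-$(v,k,1)$ design, the WQH conditions on vertices outside $C_1\cup C_2$ are inherited from the point-level conditions of Definition~\ref{Def:WQHswitch2designs} with $\P_i=\{P_i\}$. That leaves your Step~2 (the constant $c$) as the only genuinely block-graph-specific check.

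One small bookkeeping omission. In Step~3(iii) you show that $Q\mapsto QP_1$ is an injection from $(u\cap\P')\setminus L$ into $C_1(u)$, but not that it is onto. Add the converse you already used in Step~2(iv): a block of $C_1$ meeting $u$ meets it in a point $Q\in\P'\setminus\{P_1\}$, hence equals $QP_1$. Moreover $Q\notin L$, because that block is not $L$.
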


Although this switching set is defined for the block graph of a design, it is related to our switching of Definition \ref{Def:WQHswitch2designs}, as the following result shows.

\begin{theorem}
	Let \(\D = (\P,\B)\) be a $2$-$(v,k,1)$ design and let \(\D' = (\P',\B')\) be a $2$-$(v',k,1)$ subdesign of \((\P,\B)\).
	Take two points \(P_1,P_2 \in \P'\). Let \(C_1\) be all blocks in \(\B'\) that contain \(P_1\) but not \(P_2\), and \(C_2\) all blocks of \(\B'\) that contain \(P_2\) but not \(P_1\).
	Then \(\{C_1,C_2\}\) is a switching set of \(\D\) as defined in Definition \ref{Def:WQHswitch2designs}.
\end{theorem}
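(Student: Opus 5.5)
We take \(\P_1=\{P_1\}\) and \(\P_2=\{P_2\}\), and verify the four conditions of Definition \ref{Def:WQHswitch2designs}.

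\textbf{Sizes.} First we check \(|C_1|=|C_2|\). In the subdesign \(\D'\), which is a \(2\)-\((v',k,1)\) design, every point lies on \(r'=(v'-1)/(k-1)\) blocks of \(\B'\). Exactly one block of \(\B'\) contains both \(P_1\) and \(P_2\). Hence \(|C_1|=|C_2|=r'-1\). Condition 1 holds trivially, since \(|\P_1|=|\P_2|=1\).

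\textbf{Incidences with \(P_1\) and \(P_2\).} Conditions 2 and 3 are immediate from the definitions of \(C_1\) and \(C_2\). Every block of \(C_1\) contains \(P_1\) and not \(P_2\), and every block of \(C_2\) contains \(P_2\) and not \(P_1\).

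\textbf{Condition 4.} This is the main step. For a point \(P\in\P\setminus\{P_1,P_2\}\) we must show that \(P\) lies on equally many blocks of \(C_1\) and of \(C_2\). We split into two cases.

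If \(P\notin\P'\), then \(P\) lies on no block of \(\B'\), since every block of the subdesign has all its points in \(\P'\). So both counts are zero.

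If \(P\in\P'\), we use \(\lambda=1\) in \(\D'\). There is a unique block \(L_1\in\B'\) through \(P\) and \(P_1\), and a unique block \(L_2\in\B'\) through \(P\) and \(P_2\). Any block of \(C_1\) containing \(P\) contains both \(P\) and \(P_1\), so it must be \(L_1\). Hence \(P\) lies on at most one block of \(C_1\), namely \(L_1\) exactly when \(P_2\notin L_1\). Similarly, \(P\) lies on at most one block of \(C_2\), namely \(L_2\) exactly when \(P_1\notin L_2\).

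Now let \(L\) be the unique block of \(\B'\) through \(P_1\) and \(P_2\). If \(P\in L\), then \(L_1=L_2=L\), and both counts are zero. If \(P\notin L\), then \(P_2\notin L_1\): otherwise \(L_1\) would contain \(P_1\) and \(P_2\), forcing \(L_1=L\ni P\). By the symmetric argument, \(P_1\notin L_2\). So both counts equal one. In either case the counts agree, which proves condition 4.

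The only delicate point is this case analysis on whether \(P\) lies on the line through \(P_1\) and \(P_2\). The key fact is that \(\lambda=1\) in the subdesign forces at most one block of each \(C_i\) through \(P\). Together with conditions 1 to 3, this shows that \(\{C_1,C_2\}\) is a switching set of \(\D\) in the sense of Definition \ref{Def:WQHswitch2designs}.
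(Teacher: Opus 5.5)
Your proof is correct and follows the paper's route. Like the paper, you take \(\P_1=\{P_1\}\) and \(\P_2=\{P_2\}\), note that conditions 2 and 3 hold by construction, and verify condition 4 by splitting into two cases: \(P\notin\P'\), where both counts are zero, and \(P\in\P'\), where both counts are \(0\) or both are \(1\) according to whether \(P\) lies on the block through \(P_1\) and \(P_2\). You also verify explicitly that \(|C_1|=|C_2|=r'-1\), and you justify why, when \(P\) is off that block, the block through \(P\) and \(P_1\) actually lies in \(C_1\); the paper's proof leaves both points implicit.
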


\begin{proof}
	Let \(\P_1 = \{P_1\}\) and \(\P_2 = \{P_2\}\). The conditions \((P,B) \in I\) for every \((P,B) \in \P_i\times C_i, \, i \in \{1,2\} \) and \((P,B) \not\in I\) for every \((P,B) \in \P_i\times C_j, \, i,j \in \{1,2\}, i \neq j\), are satisfied by definition.
	Let \(P \in \P'\setminus(\P_1\cup \P_2)\), thus \(P \in \P'\setminus\{P_1,P_2\}\). Since \(\D'\) is a \(2-(v',k,1)\) design, \(P\) and \(P_1\) lay together in exactly one block and \(P\) and \(P_2\) lay together in exactly one block. If \(P\) is incident with the unique block that contains both \(P_1\) and \(P_2\), then \(|\{B \in C_1:(P,B) \in I\}|=|\{B\in C_2:(P,B) \in I\}| = 0\). Otherwise \(|\{B \in C_1:(P,B) \in I\}|=|\{B\in C_2:(P,B) \in I\}| = 1\).
	If \(P \in \P\setminus\P'\) then \(|\{B \in C_1:(P,B) \in I\}|=|\{B\in C_2:(P,B) \in I\}| = 0\), because all \(k\) points of any block of \(C_1\cup C_2\) are in \(\P'\).
\end{proof}

%%%%%%%%%%%%%%%%%%%%%%%%%%%%%%%%%
\subsubsection*{Case 2: \(C_1\cup C_2 \subseteq \P\)}
%%%%%%%%%%%%%%%%%%%%%%%%%%%%%%%%%

If \(\{C_1,C_2\}\) is a switching set for the incidence graph \(IG(\D)\) of a \(2-(v,k,\lambda)\) design \(D\), then for every block \(B\), either 
\begin{itemize}
	\item \(C_1 \subseteq B\) and \(C_2 \cap B = \emptyset\),
	\item \(C_2 \subseteq B\) and \(C_1 \cap B = \emptyset\),
	\item or \(|B\cap C_1| = |B\cap C_2|\).
\end{itemize}

Let \(\B_1\) be the set of blocks that satisfy the first condition and \(\B_2\) the set of blocks that satisfy the second condition.

Similarly to case 1, we want \(IG(\D)\) to remain an incidence graph of a $2$-$(v,k,\lambda)$ design after switching. Therefore, we need at least the condition \(|\B_1|=|\B_2|\), otherwise the number of common blocks of two points of \(C_1\) would change after switching.

However, with this additional condition, \(\{\B_1,\B_2\}\) forms a switching set as defined in Definition \ref{Def:WQHswitch2designs} and all results of case 1 apply in case 2.

%%%%%%%%%%%%%%%%%%%%%%%%%%%%%%%%%%%%%%%%%%%%%
\section{Application: divisible designs that allow switching}\label{sec:applications}
%%%%%%%%%%%%%%%%%%%%%%%%%%%%%%%%%%%%%%%%%%%%%

In this section, we provide examples of divisible designs that allow the switching from Theorem \ref{Thm:CS-switchDivDes}.  First we give a construction of a family of divisible designs which uses skew-type Hadamard matrices and Hadamard 3-designs. Note that a Hadamard matrix $H$ of order $m$ is called a \emph{skew-type} if $H=A+I_m$, where $A^T=-A$ and $I_m$ is the identity matrix of order $m$.

%%%%%%%%%%%%%%%%%%%%%%%%%%%%%%%%%%%%%%%%%%%%%
\subsection{A family of divisible designs obtained from skew-type Hadamard matrices}
%%%%%%%%%%%%%%%%%%%%%%%%%%%%%%%%%%%%%%%%%%%%%

Let $H$ be a skew-type Hadamard matrix of order $4 \ell$ and let $D$ be a point-by-block incidence matrix of a Hadamard 3-design, which is a 2-design with parameters 2-$(2 \lambda +2, \lambda + 1, \lambda)$ (than $b=4 \lambda +2$ and $r=2 \lambda + 1)$. Replace each diagonal entry of $H$ by the $(2 \lambda +2) \times (4 \lambda +2)$ zero matrix, each off-diagonal entry value 1 of $H$ by $D$, and each entry value -1 of $H$ by $J-D$ (i.e. by the incidence matrix of the complement of the Hadamard design), where $J$ is the all-ones matrix of size $(2 \lambda +2) \times (4 \lambda +2)$ . The obtained matrix is an point-by-block incidence matrix of a divisible design with the following parameters
$$v=4 \ell (2 \lambda+2),  k=(4 \ell-1) (\lambda + 1),$$
$$\lambda_1=(4 \ell-1) \lambda,  \lambda_2=(2 \ell-1)(2 \lambda +1),$$
$$m=4 \ell, n=2 \lambda +2.$$

Every column of the matrix $H$ corresponds to a switching set of size $(4 \lambda +2)$ of the constructed DD. 

\begin{example}
Let the Hadamard matrix $H$ be given as follows
   \[
    \begin{pmatrix} 
        1 & 1 & 1 &  1\\
       -  & 1 & -  &  1\\
       -  & 1 & 1 &  -\\
       -  & -  & 1  &  1 
    \end{pmatrix},
    \]
and let the point-by-block incidence matrix $D$ of a Hadamard $3$-$(8,4,1)$ design, which is also a $2$-$(8,4,3)$ design, is given as
  \[
   \begin{pmatrix} 
    1 & 1 & 1 & 1 & 1 & 1 & 1 &  0 & 0 & 0 & 0 & 0 & 0 & 0\\
    0 & 0 & 0 & 1 & 0 & 1 & 1 &  1 & 1 & 1 & 0 & 1 & 0 & 0\\
    1 & 0 & 0 & 0 & 1 & 0 & 1 &  0 & 1 & 1 & 1 & 0 & 1 & 0\\
    1 & 1 & 0 & 0 & 0 & 1 & 0 &  0 & 0 & 1 & 1 & 1 & 0 & 1\\ 
    0 & 1 & 1 & 0 & 0 & 0 & 1 &  1 & 0 & 0 & 1 & 1 & 1 & 0\\
    1 & 0 & 1 & 1 & 0 & 0 & 0 &  0 & 1 & 0 & 0 & 1 & 1 & 1\\
    0 & 1 & 0 & 1 & 1 & 0 & 0 &  1 & 0 & 1 & 0 & 0 & 1 & 1\\
    0 & 0 & 1 & 0 & 1 & 1 & 0 &  1 & 1 & 0 & 1 & 0 & 0 & 1
   \end{pmatrix}.
  \]
The resulting divisible design has parameters $(24,12,9,7,4,8)$. The divisible design obtained by the above given construction has $4$ switching sets of size $14$, each corresponding to a column of $H$. 
\end{example}

\begin{remark}
When $\ell=(\lambda + 1)/2$, then the above given construction produce a 2-design, i.e. $\lambda_1=\lambda_2$.
\end{remark}

\begin{remark}
This construction also works in the case when $H$ is the skew-type Hadamard matrix of order 2. In that case, the obtained DD has parameters 
$v=2(2 \lambda + 2), k=\lambda + 1$, $\lambda_1=\lambda, \lambda_2=0$, $m=2$, $n=2 \lambda +2$. 
\end{remark}

\begin{remark}
Similarly, we can replace each diagonal entry of $H$ by the all-ones matrix $J$. In that case the obtained divisible design has different parameters $v=4 \ell (2 \lambda+2)$, $k=(4 \ell+1) (\lambda + 1)$, $\lambda_1=(4 \ell+3) \lambda + 2$,  $\lambda_2=(2 \ell+1)(2 \lambda +1)$, $m=4 \ell$, $n=2 \lambda +2$.
If $n=(\lambda + 1)/2$, this construction gives us a $2$-design.
\end{remark}

The divisible designs obtained by switching any of the divisible designs from the described family are isomorphic to the starting divisible design. However, below we give a divisible design with parameters (28,6,2,1,7,4) that admits switching which may produce a divisible design non-isomorphic to the starting divisible design. 

%%%%%%%%%%%%%%%%%%%%%%%%%%%%%%%%%%%%%%%%%%%%%
\subsection{Divisible design with parameters (28,6,2,1,7,4)}
%%%%%%%%%%%%%%%%%%%%%%%%%%%%%%%%%%%%%%%%%%%%%
Divisible design digraphs (DDDs) were introduced in \cite{crnkovic2015ddd}. Note that the neighborhood design of a divisible design digraph with parameters $(v,k, \lambda_1, \lambda_2,m,n)$ is a symmetric divisible design with the same parameters (see \cite[Theorem 1]{ crnkovic2015ddd}).

The DDD with parameters (28,6,2,1,7,4) given in \cite[Theorem 19]{ crnkovic2015ddd}, i.e. its neighborhood design, admits switching. This DDD is constructed in the following way. 
Let us define auxiliary matrices $u_1$, $u_2$ and $u_3$ as follows:

\begin{displaymath}  u_1=   \left(
\begin{tabular}{cccc}
1 & 1 & 0 & 0 \\
1 & 1 & 0 & 0 \\
0 & 0 & 1 & 1 \\
0 & 0 & 1 & 1
\end{tabular}                 \right),
\quad
u_2=   \left(
\begin{tabular}{cccc}
1 &  0  &  1 &  0 \\
0 &  1  &  0 &  1 \\
1 &  0  &  1 &  0 \\
0 &  1  &  0 &  1
\end{tabular}                 \right),
\quad
u_3=   \left(
\begin{tabular}{cccc}
1 &  0  &  0 &  1 \\
0 &  1  &  1 &  0 \\
0 &  1  &  1 &  0 \\
1 &  0  &  0 &  1
\end{tabular}                 \right).
\end{displaymath}
Then the circulant block matrix $D=circ(0,u_1,u_2,0,u_3,0,0)$ is the adjacency matrix of
a DDD with parameters (28,6,2,1,7,4).

If we consider the adjacency matrix of this DDD as the point-by-block incidence matrix of the divisible design, then each of the seven columns of blocks of the circulant  matrix $circ(0,u_1,u_2,0,u_3,0,0)$ is a switching set. Taking all combinations of these seven switching sets we get 128 divisible designs, including the starting divisible design, which split into two isomorphism classes.

%%%%%%%%%%%%%%%%%%%%%%%%%%%%%%%%%%%%%%%%%%%%%%%%%%%%%%
\subsection*{Acknowledgements} 
%%%%%%%%%%%%%%%%%%%%%%%%%%%%%%%%%%%%%%%%%%%%%%%%%%%%%%
A. Abiad is supported by the Dutch Research Council through the grants VI.Vidi.213.085 and OCENW.KLEIN.475. D. Crnkovi\' c and A. \v Svob are supported by Croatian Science Foundation under the project HRZZ-IP-2022-10-4571 and by European Union-NextGenerationEU, project number uniri-iz-25-46-KonGeoGraGru. 
The first and third author thank Leo Storme for inspiring discussions in an early stage of this work.

\bibliographystyle{amsplain}
\bibliography{sources}

\newpage
%%%%%%%%%%%%%%%%%%%%%%%%%%%%%%%%%%%%%%%%%%%%%
\section{Appendix}

%%%%%%%%%%%%%%%%%%%%%%%%%%%%%%%%%%%%%%%%%%%%%
\subsection{A divisible design graph from a Hadamard matrix}\label{Sec:DivDesHadMat}
%%%%%%%%%%%%%%%%%%%%%%%%%%%%%%%%%%%%%%%%%%%%%

The incidence matrices used in Section \ref{Sec:switch2designs} are included below.

\begin{equation*}
	A=\begin{pNiceMatrix}\Block[fill=red!15, rounded-corners]{24-8}{}
		0 & 1 & 1 & 1 & 0 & 1 & 1 & 1 &\Block[fill=yellow!15, rounded-corners]{24-8}{} 0 & 1 & 1 & 1 & 1 & 0 & 0 & 0 &\Block[fill=green!15, rounded-corners]{24-8}{} 0 & 0 & 0 & 0 & 0 & 0 & 0 & 0 \\
		1 & 0 & 1 & 1 & 1 & 0 & 1 & 1 & 1 & 0 & 1 & 1 & 0 & 1 & 0 & 0 & 0 & 0 & 0 & 0 & 0 & 0 & 0 & 0 \\
		1 & 1 & 0 & 1 & 1 & 1 & 0 & 1 & 1 & 1 & 0 & 1 & 0 & 0 & 1 & 0 & 0 & 0 & 0 & 0 & 0 & 0 & 0 & 0 \\
		1 & 1 & 1 & 0 & 1 & 1 & 1 & 0 & 1 & 1 & 1 & 0 & 0 & 0 & 0 & 1 & 0 & 0 & 0 & 0 & 0 & 0 & 0 & 0 \\
		0 & 1 & 1 & 1 & 0 & 1 & 1 & 1 & 1 & 0 & 0 & 0 & 0 & 1 & 1 & 1 & 0 & 0 & 0 & 0 & 0 & 0 & 0 & 0 \\
		1 & 0 & 1 & 1 & 1 & 0 & 1 & 1 & 0 & 1 & 0 & 0 & 1 & 0 & 1 & 1 & 0 & 0 & 0 & 0 & 0 & 0 & 0 & 0 \\
		1 & 1 & 0 & 1 & 1 & 1 & 0 & 1 & 0 & 0 & 1 & 0 & 1 & 1 & 0 & 1 & 0 & 0 & 0 & 0 & 0 & 0 & 0 & 0 \\
		1 & 1 & 1 & 0 & 1 & 1 & 1 & 0 & 0 & 0 & 0 & 1 & 1 & 1 & 1 & 0 & 0 & 0 & 0 & 0 & 0 & 0 & 0 & 0 \\
		\hline
		0 & 1 & 1 & 1 & 1 & 0 & 0 & 0 & 0 & 0 & 0 & 0 & 0 & 0 & 0 & 0 & 0 & 1 & 1 & 1 & 0 & 1 & 1 & 1 \\
		1 & 0 & 1 & 1 & 0 & 1 & 0 & 0 & 0 & 0 & 0 & 0 & 0 & 0 & 0 & 0 & 1 & 0 & 1 & 1 & 1 & 0 & 1 & 1 \\
		1 & 1 & 0 & 1 & 0 & 0 & 1 & 0 & 0 & 0 & 0 & 0 & 0 & 0 & 0 & 0 & 1 & 1 & 0 & 1 & 1 & 1 & 0 & 1 \\
		1 & 1 & 1 & 0 & 0 & 0 & 0 & 1 & 0 & 0 & 0 & 0 & 0 & 0 & 0 & 0 & 1 & 1 & 1 & 0 & 1 & 1 & 1 & 0 \\
		1 & 0 & 0 & 0 & 0 & 1 & 1 & 1 & 0 & 0 & 0 & 0 & 0 & 0 & 0 & 0 & 0 & 1 & 1 & 1 & 0 & 1 & 1 & 1 \\
		0 & 1 & 0 & 0 & 1 & 0 & 1 & 1 & 0 & 0 & 0 & 0 & 0 & 0 & 0 & 0 & 1 & 0 & 1 & 1 & 1 & 0 & 1 & 1 \\
		0 & 0 & 1 & 0 & 1 & 1 & 0 & 1 & 0 & 0 & 0 & 0 & 0 & 0 & 0 & 0 & 1 & 1 & 0 & 1 & 1 & 1 & 0 & 1 \\
		0 & 0 & 0 & 1 & 1 & 1 & 1 & 0 & 0 & 0 & 0 & 0 & 0 & 0 & 0 & 0 & 1 & 1 & 1 & 0 & 1 & 1 & 1 & 0 \\
		\hline
		0 & 0 & 0 & 0 & 0 & 0 & 0 & 0 & 0 & 1 & 1 & 1 & 0 & 1 & 1 & 1 & 0 & 1 & 1 & 1 & 1 & 0 & 0 & 0 \\
		0 & 0 & 0 & 0 & 0 & 0 & 0 & 0 & 1 & 0 & 1 & 1 & 1 & 0 & 1 & 1 & 1 & 0 & 1 & 1 & 0 & 1 & 0 & 0 \\
		0 & 0 & 0 & 0 & 0 & 0 & 0 & 0 & 1 & 1 & 0 & 1 & 1 & 1 & 0 & 1 & 1 & 1 & 0 & 1 & 0 & 0 & 1 & 0 \\
		0 & 0 & 0 & 0 & 0 & 0 & 0 & 0 & 1 & 1 & 1 & 0 & 1 & 1 & 1 & 0 & 1 & 1 & 1 & 0 & 0 & 0 & 0 & 1 \\
		0 & 0 & 0 & 0 & 0 & 0 & 0 & 0 & 0 & 1 & 1 & 1 & 0 & 1 & 1 & 1 & 1 & 0 & 0 & 0 & 0 & 1 & 1 & 1 \\
		0 & 0 & 0 & 0 & 0 & 0 & 0 & 0 & 1 & 0 & 1 & 1 & 1 & 0 & 1 & 1 & 0 & 1 & 0 & 0 & 1 & 0 & 1 & 1 \\
		0 & 0 & 0 & 0 & 0 & 0 & 0 & 0 & 1 & 1 & 0 & 1 & 1 & 1 & 0 & 1 & 0 & 0 & 1 & 0 & 1 & 1 & 0 & 1 \\
		0 & 0 & 0 & 0 & 0 & 0 & 0 & 0 & 1 & 1 & 1 & 0 & 1 & 1 & 1 & 0 & 0 & 0 & 0 & 1 & 1 & 1 & 1 & 0
		\CodeAfter
		\tikz \draw  (1-|9) -- (24-|9);
		\tikz \draw  (1-|17) -- (24-|17);
	\end{pNiceMatrix} 
\end{equation*}
\newpage
\begin{equation*}
	A'=\begin{pNiceMatrix}
		0 & 1 & \Block[fill=blue!15, rounded-corners]{24-2}{}1 & 1 & 0 & 1 & \Block[fill=blue!15, rounded-corners]{24-2}{} 1 & 1 & 0 & 1 & 1 & 1 & 1 & 0 & 0 & 0 & 0 & 0 & 0 & 0 & 0 & 0 & 0 & 0 \\
		1 & 0 & 1 & 1 & 1 & 0 & 1 & 1 & 1 & 0 & 1 & 1 & 0 & 1 & 0 & 0 & 0 & 0 & 0 & 0 & 0 & 0 & 0 & 0 \\
		1 & 1 & 1 & 0 & 1 & 1 & 1 & 0 & 1 & 1 & 0 & 1 & 0 & 0 & 1 & 0 & 0 & 0 & 0 & 0 & 0 & 0 & 0 & 0 \\
		1 & 1 & 0 & 1 & 1 & 1 & 0 & 1 & 1 & 1 & 1 & 0 & 0 & 0 & 0 & 1 & 0 & 0 & 0 & 0 & 0 & 0 & 0 & 0 \\
		0 & 1 & 1 & 1 & 0 & 1 & 1 & 1 & 1 & 0 & 0 & 0 & 0 & 1 & 1 & 1 & 0 & 0 & 0 & 0 & 0 & 0 & 0 & 0 \\
		1 & 0 & 1 & 1 & 1 & 0 & 1 & 1 & 0 & 1 & 0 & 0 & 1 & 0 & 1 & 1 & 0 & 0 & 0 & 0 & 0 & 0 & 0 & 0 \\
		1 & 1 & 1 & 0 & 1 & 1 & 1 & 0 & 0 & 0 & 1 & 0 & 1 & 1 & 0 & 1 & 0 & 0 & 0 & 0 & 0 & 0 & 0 & 0 \\
		1 & 1 & 0 & 1 & 1 & 1 & 0 & 1 & 0 & 0 & 0 & 1 & 1 & 1 & 1 & 0 & 0 & 0 & 0 & 0 & 0 & 0 & 0 & 0 \\
		\hline
		0 & 1 & 0 & 0 & 1 & 0 & 1 & 1 & 0 & 0 & 0 & 0 & 0 & 0 & 0 & 0 & 0 & 1 & 1 & 1 & 0 & 1 & 1 & 1 \\
		1 & 0 & 0 & 0 & 0 & 1 & 1 & 1 & 0 & 0 & 0 & 0 & 0 & 0 & 0 & 0 & 1 & 0 & 1 & 1 & 1 & 0 & 1 & 1 \\
		1 & 1 & 1 & 0 & 0 & 0 & 0 & 1 & 0 & 0 & 0 & 0 & 0 & 0 & 0 & 0 & 1 & 1 & 0 & 1 & 1 & 1 & 0 & 1 \\
		1 & 1 & 0 & 1 & 0 & 0 & 1 & 0 & 0 & 0 & 0 & 0 & 0 & 0 & 0 & 0 & 1 & 1 & 1 & 0 & 1 & 1 & 1 & 0 \\
		1 & 0 & 1 & 1 & 0 & 1 & 0 & 0 & 0 & 0 & 0 & 0 & 0 & 0 & 0 & 0 & 0 & 1 & 1 & 1 & 0 & 1 & 1 & 1 \\
		0 & 1 & 1 & 1 & 1 & 0 & 0 & 0 & 0 & 0 & 0 & 0 & 0 & 0 & 0 & 0 & 1 & 0 & 1 & 1 & 1 & 0 & 1 & 1 \\
		0 & 0 & 0 & 1 & 1 & 1 & 1 & 0 & 0 & 0 & 0 & 0 & 0 & 0 & 0 & 0 & 1 & 1 & 0 & 1 & 1 & 1 & 0 & 1 \\
		0 & 0 & 1 & 0 & 1 & 1 & 0 & 1 & 0 & 0 & 0 & 0 & 0 & 0 & 0 & 0 & 1 & 1 & 1 & 0 & 1 & 1 & 1 & 0 \\
		\hline
		0 & 0 & 0 & 0 & 0 & 0 & 0 & 0 & 0 & 1 & 1 & 1 & 0 & 1 & 1 & 1 & 0 & 1 & 1 & 1 & 1 & 0 & 0 & 0 \\
		0 & 0 & 0 & 0 & 0 & 0 & 0 & 0 & 1 & 0 & 1 & 1 & 1 & 0 & 1 & 1 & 1 & 0 & 1 & 1 & 0 & 1 & 0 & 0 \\
		0 & 0 & 0 & 0 & 0 & 0 & 0 & 0 & 1 & 1 & 0 & 1 & 1 & 1 & 0 & 1 & 1 & 1 & 0 & 1 & 0 & 0 & 1 & 0 \\
		0 & 0 & 0 & 0 & 0 & 0 & 0 & 0 & 1 & 1 & 1 & 0 & 1 & 1 & 1 & 0 & 1 & 1 & 1 & 0 & 0 & 0 & 0 & 1 \\
		0 & 0 & 0 & 0 & 0 & 0 & 0 & 0 & 0 & 1 & 1 & 1 & 0 & 1 & 1 & 1 & 1 & 0 & 0 & 0 & 0 & 1 & 1 & 1 \\
		0 & 0 & 0 & 0 & 0 & 0 & 0 & 0 & 1 & 0 & 1 & 1 & 1 & 0 & 1 & 1 & 0 & 1 & 0 & 0 & 1 & 0 & 1 & 1 \\
		0 & 0 & 0 & 0 & 0 & 0 & 0 & 0 & 1 & 1 & 0 & 1 & 1 & 1 & 0 & 1 & 0 & 0 & 1 & 0 & 1 & 1 & 0 & 1 \\
		0 & 0 & 0 & 0 & 0 & 0 & 0 & 0 & 1 & 1 & 1 & 0 & 1 & 1 & 1 & 0 & 0 & 0 & 0 & 1 & 1 & 1 & 1 & 0
		\CodeAfter
		\tikz \draw  (1-|9) -- (24-|9);
		\tikz \draw  (1-|17) -- (24-|17);
	\end{pNiceMatrix}
\end{equation*}

\end{document}